\newtheorem{Thm}{Theorem}[section]
\newtheorem{Prop}[Thm]{Proposition}
\newtheorem{Lem}[Thm]{Lemma}
\newtheorem{Cor}[Thm]{Corollary}
\newtheorem{Thmint}{Theorem}[section]
\newtheorem{MThm}{}
\theoremstyle{definition}
\newtheorem{Rem}[Thm]{Remark}
\newtheorem{Def}[Thm]{Definition}
\newcommand{\Cs}{C$^\ast$}
\newcommand{\sd}{^{\ast\ast}}
\newcommand{\id}{\mbox{\rm id}}
\newcommand{\rg}{\mathop{{\mathrm C}_{\mathrm r}^\ast}}
\newcommand{\rc}{\mathop{\rtimes _{\mathrm r}}}
\newcommand{\ac}{\mathop{\rtimes _{\mathrm{alg}}}}
\newcommand{\rca}[1]{\mathop{\rtimes _{{\mathrm r}, #1}}}
\DeclareMathOperator{\supp}{supp}
\DeclareMathOperator{\bigfp}{\lower0.25ex\hbox{\LARGE $\ast$}}
\newcommand{\ad}{\mathop{\rm ad}}
\title[Extremely tight \Cs-inclusions]
{Non-amenable tight squeezes by Kirchberg algebras}
\author{Yuhei Suzuki}
\subjclass[2010]{Primary~
46L55, Secondary~46L05, 
46L07}
\keywords{Kirchberg algebras, \Cs-dynamical systems, rigid inclusions}
\address{Department of Mathematics, Faculty of Science, Hokkaido University,
Kita 10, Nishi 8, Kita-Ku, Sapporo, Hokkaido, 060-0810, Japan}
\email{yuhei@math.sci.hokudai.ac.jp}
\begin{document}

\begin{abstract}
We give a framework to produce \Cs-algebra inclusions with extreme properties.
This gives the first constructive nuclear minimal ambient \Cs-algebras.
We further obtain a purely infinite analogue of Dadarlat's modeling theorem on AF-algebras:
Every Kirchberg algebra is rigidly and KK-equivalently sandwiched by non-nuclear \Cs-algebras without intermediate \Cs-algebras.
Finally we reveal a novel property of Kirchberg algebras:
They embed into arbitrarily wild \Cs-algebras
as rigid maximal \Cs-subalgebras.
\end{abstract}
\maketitle
\section{Introduction}
Thanks to the recent progress in the classification theory of amenable \Cs-algebras,
Elliott's classification program has been almost completed; see \cite{Win} for a recent survey.
As crucial ideas and techniques have been developed in this theory
(see e.g., \cite{EGLN}, \cite{Kir}, \cite{MS2}, \cite{Phi}, \cite{TWW}, and references in \cite{Win}),
a next natural attempt is applying the theory and its byproducts to understand the structure of simple \Cs-algebras beyond the classifiable class (e.g., the reduced non-amenable group \Cs-algebras).

To take the advantage of rich structures of classifiable \Cs-algebras to understand non-amenable \Cs-algebras, one possible natural strategy
is to bridge two \Cs-algebras from each class via a tight inclusion.
Indeed tight inclusions of operator algebras receive much attentions
and are deeply studied by many hands because of their importance in the structure theory of operator algebras: see e.g., \cite{Ham79}, \cite{Ham85}, \cite{KK}, \cite{Lon}, \cite{Oza07}, \cite{Pop81}, \cite{Pop}, \cite{PopICM}.
Recent highlights are the breakthrough results on \Cs-simplicity
\cite{KK}, \cite{BKKO} (cf.~ \cite{Ham85}), in which
tight inclusions are used to reduce problems on the reduced group \Cs-algebras
to those of less complicated \Cs-algebras.
This suggests the existence of \emph{boundary theory} for general \Cs-algebras (cf.~\cite{Oza07}).
Other strategies based on expansions of \Cs-algebras
also work successfully in the Baum--Connes conjecture \cite{BCH}, see e.g., \cite{Hig}, \cite{HK}.
These significant results are motivations behind the present work.

The purpose of the present paper is to establish
a new powerful framework to produce extreme examples of tight \Cs-algebra inclusions.
We particularly consider the following three conditions coming from the three different viewpoints:
\begin{description}
\item[Algebraic side]
absence of intermediate \Cs-algebras (maximality/minimality),
\item[Topological side] KK-equivalence,
\item[Order structural side] Hamana's operator system rigidity \cite{Ham79b}. 
\end{description}
We note that the third condition is a crucial ingredient of \cite{KK}, \cite{BKKO}.
Several questions on the first condition were posed by Ge \cite{Ge} for instance.
The second condition is partly motivated by the Baum--Connes conjecture (cf.~ \cite{Hig}, \cite{HK}).

We now present the Main Theorem of this paper.
Throughout the paper, denote by $\mathbb{F}_\infty$
a countable free group of infinite rank.
\begin{MThm}[see Proposition \ref{Prop:upert2}, Theorems \ref{Thm:inter}, \ref{Thm:rigid}]
Let $A$ be a simple unital separable purely infinite \Cs-algebra.
Let $\alpha \colon \mathbb{F}_\infty \curvearrowright A$ be an approximately inner \Cs-dynamical system.
Then there is an inner perturbation $\gamma$ of $\alpha$
with the following property:
Any \Cs-dynamical system $\beta \colon \mathbb{F}_\infty \curvearrowright B$
on a simple \Cs-algebra $B$ with $B^\beta \neq 0$ gives a rigid inclusion
$B \rca{\beta}\mathbb{F}_\infty \subset (A\otimes B) \rca{\gamma\otimes \beta} \mathbb{F}_\infty$ without intermediate \Cs-algebras.
\end{MThm}
Note that the statement does not exclude the case that $B= \mathbb{C}$
and the case that $\alpha$ is the trivial action.
Even in these specific cases, the theorem provides many \Cs-algebra inclusions
of remarkable new features.
The Main Theorem also sheds some light on \Cs-dynamical systems.
Inner automorphisms are usually regarded as trivial objects
in the study of automorphisms of (single) \Cs-algebras.
In fact when two \Cs-dynamical systems are \emph{cocycle conjugate},
their crossed product \Cs-algebras are isomorphic.
However, the Main Theorem reveals that the associated
\Cs-algebra inclusions can be changed drastically
by inner automorphisms.
It is also interesting to compare these phenomena with the remarkable rigidity phenomena on the crossed product algebra inclusions studied and conjectured by Neshveyev--St{\o}rmer \cite{NS} (see also the recent works \cite{CD}, \cite{Suz19}).

\subsection*{Applications to Kirchberg algebras}
Since it is fairly easy to construct free group \Cs-dynamical systems (because of the freeness),
the Main Theorem has a wide range of applications.
Furthermore, although it is not immediately apparent from the statement, the Main Theorem is
successfully applied to \emph{arbitrary} Kirchberg algebras.
As a consequence, we obtain novel properties of Kirchberg algebras.

Recall that a \Cs-algebra is said to be a \emph{Kirchberg algebra} if it is simple, separable, nuclear, and purely infinite. We refer the reader to the book \cite{Ror} for basic facts and backgrounds on Kirchberg algebras.
Some beautiful and rich features of Kirchberg algebras
can be seen from the complete classification theorem of Kirchberg \cite{Kir} and Phillips \cite{Phi}.
We also refer the reader to \cite{IM}, \cite{IM2} (cf.~\cite{DP}) for a new interaction between algebraic topology
and the symmetry structure of Kirchberg algebras.

As the first main consequence, we obtain the first constructive examples of nuclear minimal ambient \Cs-algebras.
(Here \emph{constructive} at least means that all constructions are
elementary and concretely understandable and avoid the Baire category theorem.)
\begin{Thmint}\label{Thmint:Main}
Let $\alpha \colon \mathbb{F}_\infty \curvearrowright A$ be a \Cs-dynamical system on a simple separable nuclear \Cs-algebra with $A^\alpha \neq 0$.
Then the reduced crossed product $A \rca{\alpha} \mathbb{F}_\infty$
admits a KK-equivalent rigid embedding into a Kirchberg algebra without intermediate \Cs-algebras.
\end{Thmint}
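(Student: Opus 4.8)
The plan is to apply the Main Theorem to a carefully chosen auxiliary dynamical system and to instantiate its conclusion with the given $(A,\alpha)$. The preliminary — and hardest — step is to produce a unital Kirchberg algebra $E$ together with an action $\delta\colon\mathbb{F}_\infty\curvearrowright E$ that is (i) amenable in the sense of Anantharaman--Delaroche, (ii) outer, and (iii) such that the unital inclusion $\mathbb{C}\hookrightarrow E$, with $\mathbb{C}$ carrying the trivial action, is a $\mathrm{KK}^{\mathbb{F}_\infty}$-equivalence. Property (iii) already forces each $\delta_g$ to be $\mathrm{KK}$-trivial, hence by the Kirchberg--Phillips uniqueness theorem approximately unitarily equivalent to $\mathrm{id}_E$; in particular $\delta$ is approximately inner, so $(E,\delta)$ is an admissible input for the Main Theorem. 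To build such $(E,\delta)$ one would use that $\mathbb{F}_\infty$ is free (so $\delta$ is merely an unconstrained sequence of automorphisms of $E$), exact, and a-T-menable: the latter makes the Dirac/dual-Dirac machinery work, so that the unit class of $\mathrm{KK}^{\mathbb{F}_\infty}$ is represented by a proper — hence amenable — $\mathbb{F}_\infty$-algebra, which one then promotes to a unital Kirchberg model by an equivariant $\mathcal{O}_\infty$-absorption argument preserving amenability, the $\mathrm{KK}^{\mathbb{F}_\infty}$-class, and (generically) outerness.

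Granting $(E,\delta)$: since $E$ is simple, unital, separable and purely infinite and $\delta$ is approximately inner, the Main Theorem provides an inner perturbation $\gamma$ of $\delta$ with the stated universal property; cocycle perturbations preserve amenability, outerness, and the $\mathrm{KK}^{\mathbb{F}_\infty}$-equivalence in (iii) (the perturbing cocycle fixes $1_E$), so $\gamma$ again satisfies (i)--(iii). Applying the universal property to the simple \Cs-algebra $B:=A$ with action $\beta:=\alpha$ — the hypothesis $A^\alpha\neq 0$ being exactly what is needed — we get that the inclusion
\[
\iota\colon A\rca{\alpha}\mathbb{F}_\infty\hookrightarrow (E\otimes A)\rca{\gamma\otimes\alpha}\mathbb{F}_\infty=:\mathcal{K},
\]
induced by the equivariant embedding $a\mapsto 1_E\otimes a$, is rigid and admits no intermediate \Cs-algebra.

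Next I would verify that $\mathcal{K}$ is a Kirchberg algebra. Separability is clear. Nuclearity holds because $E\otimes A$ is nuclear and $\gamma\otimes\alpha$ is amenable (amenability of $\delta$ passes to the minimal tensor product $\delta\otimes\alpha$ with the arbitrary action $\alpha$, and then to its cocycle perturbation $\gamma\otimes\alpha$), so Anantharaman--Delaroche applies. Simplicity holds because $E\otimes A$ is simple ($E$ simple, $A$ simple and nuclear) and $\gamma\otimes\alpha$ is outer — outerness of $\gamma_g$ makes $\gamma_g\otimes\alpha_g$ outer on the simple algebra $E\otimes A$ — so Kishimoto's theorem applies. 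Pure infiniteness follows since $E\otimes A$ is purely infinite simple and $\mathcal{K}$ is a simple reduced crossed product of it, via the standard argument through the canonical faithful conditional expectation $\mathcal{K}\to E\otimes A$.

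Finally, $\iota$ is a $\mathrm{KK}$-equivalence: by (iii) for $\gamma$, the class $[1_E]\in\mathrm{KK}^{\mathbb{F}_\infty}((\mathbb{C},\mathrm{id}),(E,\gamma))$ is invertible, hence so is $[1_E]\otimes\mathrm{id}_{(A,\alpha)}=[1_E\otimes\mathrm{id}_A]\in\mathrm{KK}^{\mathbb{F}_\infty}((A,\alpha),(E\otimes A,\gamma\otimes\alpha))$, and the Kasparov descent for reduced crossed products (equivalently, the descent for full crossed products together with $K$-amenability of $\mathbb{F}_\infty$ and naturality of the regular representations) sends it to $[\iota]$, which is therefore invertible in $\mathrm{KK}$. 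I expect the genuine difficulty to lie entirely in the first step — reconciling amenability of $\delta$, which is what keeps $\mathcal{K}$ nuclear despite $\mathbb{F}_\infty$ being non-amenable, with $\mathrm{KK}^{\mathbb{F}_\infty}$-triviality against the unit; the remaining verifications are an assembly of standard permanence properties around the Main Theorem.
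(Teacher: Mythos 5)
Your overall architecture matches the paper's: tensor the given system with an auxiliary amenable $\mathbb{F}_\infty$-system on a unital Kirchberg algebra, invoke the Main Theorem for rigidity and absence of intermediate algebras, check that the big crossed product is Kirchberg, and then prove the inclusion is a KK-equivalence. However, your first step --- producing a unital Kirchberg algebra $E$ with an amenable, outer action $\delta$ such that $(\mathbb{C},\mathrm{triv})\hookrightarrow(E,\delta)$ is a $\mathrm{KK}^{\mathbb{F}_\infty}$-equivalence --- is a genuine gap, not a routine assembly. The Dirac/dual-Dirac machinery for the a-T-menable group $\mathbb{F}_\infty$ gives a factorization of the unit of $\mathrm{KK}^{\mathbb{F}_\infty}$ through a \emph{proper} (commutative, non-unital) algebra such as $C_0$ of a tree; it does not hand you a $\mathrm{KK}^{\mathbb{F}_\infty}$-equivalence between $\mathbb{C}$ and anything, let alone a unital Kirchberg algebra carrying an amenable action, and the promised ``equivariant $\mathcal{O}_\infty$-absorption preserving the $\mathrm{KK}^{\mathbb{F}_\infty}$-class'' is exactly the construction you would have to supply. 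Everything downstream (approximate innerness of $\delta$ via Kirchberg--Phillips, descent plus K-amenability turning the equivariant equivalence into $[\iota]$ being invertible) is conditional on this unproduced object.

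The paper shows that this heavy equivariant input is unnecessary. It takes the concrete amenable action $\beta\colon\mathbb{F}_\infty\curvearrowright\mathcal{O}_\infty$ from \cite{Suz19} (pointwise approximately inner by construction), upgrades it by Proposition \ref{Prop:upert2}, and forms $A\rca{\alpha}\mathbb{F}_\infty\subset(A\otimes\mathcal{O}_\infty)\rca{\alpha\otimes\beta}\mathbb{F}_\infty$. The KK-equivalence is then obtained \emph{non-equivariantly}: $\mathbb{C}\subset\mathcal{O}_\infty$ is a KK-equivalence by Cuntz and Pimsner--Voiculescu, hence so is $\bigoplus_I A\subset\bigoplus_I(A\otimes\mathcal{O}_\infty)$ for any countable $I$, and Pimsner's six-term exact sequences for crossed products by groups acting on trees, together with the Five Lemma and naturality, transfer this to the crossed-product inclusion. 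So no $\mathrm{KK}^{\mathbb{F}_\infty}$-equivalence is ever needed. Two smaller points: the paper derives outerness of $\alpha\otimes\beta$ directly from amenability (an amenable action of $\mathbb{F}_\infty$ on a simple \Cs-algebra has trivial ``inner part'' since $\mathbb{F}_\infty$ has no nontrivial amenable normal subgroup), rather than positing outerness of the auxiliary action and passing it through the tensor product as you do (your passage requires a slice-map argument you do not give); and your remark that inner perturbations preserve the relevant properties is correct but should be checked against the precise hypotheses of Proposition \ref{Prop:upert2} rather than asserted generically.
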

Note that, in our previous work \cite{Suzmin}, we obtained partial results in specific cases (without KK-condition), which in particular gave
the first examples of nuclear minimal ambient \Cs-algebras.
However all constructions in \cite{Suzmin} depend on the Baire category theorem (applied to the space of Cantor systems of $\mathbb{F}_\infty$).
It is a novelty of the present paper that our new constructions
are quite elementary and avoid the Baire category theorem.

The second main consequence is the following structure result on Kirchberg algebras.
This theorem can be considered as a Kirchberg algebra analogue of Dadarlat's modeling theorem for AF-algebras \cite{Da}.
Since Kirchberg algebras have no elementary inductive limit structure,
our approach naturally differs from \cite{Da}.
\begin{Thmint}\label{Thmint:Kir}
For every Kirchberg algebra $A$, there are \Cs-algebra inclusions
$B \subset A \subset C$ satisfying the following conditions.
\begin{itemize}
\item $B$ is non-nuclear, $C$ is non-exact, and both algebras are simple and purely infinite.
\item The \Cs-subalgebras $B\subset A$ and $A \subset C$ are rigid and maximal. 
\item These inclusions give KK-equivalences.
\end{itemize}
\end{Thmint}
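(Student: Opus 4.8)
The plan is to construct the two inclusions separately: $A\subset C$ from the Main Theorem, and $B\subset A$ from Theorem~\ref{Thmint:Main}. Both rest on a preliminary \emph{realisation step}, which I expect to be the technical heart. Using the Pimsner--Voiculescu-type six-term exact sequence for the tree on which $\mathbb{F}_\infty$ acts, together with the Kirchberg--Phillips classification and the flexibility of actions on Kirchberg algebras, one produces: (i) a Kirchberg algebra $D$ and an \emph{amenable} outer action $\beta\colon\mathbb{F}_\infty\curvearrowright D$ with $D\rca{\beta}\mathbb{F}_\infty\cong A$; and (ii) Kirchberg algebras $D_1'\subset D_1=D_1'\otimes\mathcal O_\infty$ and an outer \emph{non-amenable} action $\delta_1=\delta_1'\otimes\mathrm{id}_{\mathcal O_\infty}$ of $\mathbb{F}_\infty$ on $D_1$ for which $B:=D_1\rca{\delta_1}\mathbb{F}_\infty\cong(D_1'\rca{\delta_1'}\mathbb{F}_\infty)\otimes\mathcal O_\infty$ is simple, separable, purely infinite, non-nuclear, and $KK$-equivalent to $A$ with $(K_*(B),[1])=(K_*(A),[1_A])$. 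In both constructions the generators of $\mathbb{F}_\infty$ are used to carry prescribed endomorphisms of the $K$-theory of the (large) Kirchberg coefficient algebra, chosen so that the six-term sequence cuts it down to that of $A$; amenability of $\beta$ keeps $D\rca{\beta}\mathbb{F}_\infty$ nuclear, whereas non-amenability of $\delta_1'$ forces $B$ to be non-nuclear, its coefficient being nuclear. (We may assume $A$ unital; the stable case follows by stabilising a sandwich of the associated unital Kirchberg algebra.)

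Granting this, for the inclusion $A\subset C$ I would fix a non-exact, simple, unital, separable, purely infinite \Cs-algebra $A_0$ that is $KK$-equivalent to $\mathbb{C}$ --- for instance $A_0=\mathcal O_\infty\otimes E$ with $E$ a non-exact simple unital separable \Cs-algebra satisfying $E\sim_{KK}\mathbb{C}$; then $A_0$ is non-exact, $\mathcal O_\infty$-absorbing (hence simple and purely infinite), separable, unital, and $K_1(A_0)=0$. Apply the Main Theorem with coefficient algebra $A_0$, with $\alpha$ the trivial action, and with the pair $(D,\beta)$ from (i) (admissible since $D$ is simple and $D^\beta\ni 1$). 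Because $\alpha$ is trivial, the inner perturbation $\gamma$ furnished by the Main Theorem is of the form $\mathrm{Ad}\circ u$ for a homomorphism $u\colon\mathbb{F}_\infty\to U(A_0)$; untwisting by $u$ yields an isomorphism $(A_0\otimes D)\rca{\gamma\otimes\beta}\mathbb{F}_\infty\cong A_0\otimes(D\rca{\beta}\mathbb{F}_\infty)\cong A_0\otimes A=:C$ carrying $D\rca{\beta}\mathbb{F}_\infty$ onto a $u$-twisted diagonal copy of $A$. By the Main Theorem this copy is a rigid \Cs-subalgebra of $C$ with no intermediate \Cs-algebras; $C$ is non-exact (it contains the non-exact $A_0$) and is simple and purely infinite (it absorbs $\mathcal O_\infty$ and is a tensor product of simple \Cs-algebras, one of which is nuclear). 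Finally the inclusion is a $KK$-equivalence: since $K_1(A_0)=0$ the unitaries $u(s_i)$ lie in $U_0(A_0)$, so the $u$-twisted diagonal is homotopic through $\ast$-homomorphisms (via a path of homomorphisms $\mathbb{F}_\infty\to U(A_0)$ joining $u$ to the trivial one) to the straight inclusion $a\mapsto 1_{A_0}\otimes a$, and the latter is a $KK$-equivalence because $A_0\sim_{KK}\mathbb{C}$ is implemented by $[1_{A_0}]$.

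For the inclusion $B\subset A$ I would apply Theorem~\ref{Thmint:Main} to the pair $(D_1,\delta_1)$ of (ii), obtaining a Kirchberg algebra $\mathcal K_1$ and a rigid, unital embedding $B=D_1\rca{\delta_1}\mathbb{F}_\infty\hookrightarrow\mathcal K_1$ with no intermediate \Cs-algebras that is a $KK$-equivalence. Then $\mathcal K_1$ is a unital Kirchberg algebra in the bootstrap class --- being $KK$-equivalent to $B$, which lies there as $D_1'$ may be chosen in the bootstrap class, closed under crossed products by $\mathbb{F}_\infty$ and under tensoring by $\mathcal O_\infty$ --- and the $KK$-equivalence identifies $(K_*(\mathcal K_1),[1_{\mathcal K_1}])$ with $(K_*(B),[1])=(K_*(A),[1_A])$; hence $\mathcal K_1\cong A$ by Kirchberg--Phillips. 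Pulling the embedding back along this isomorphism exhibits $B$ as a rigid maximal \Cs-subalgebra of $A$ with a $KK$-equivalent inclusion, where $B$ is simple, separable, purely infinite ($\mathcal O_\infty$-absorbing), and non-nuclear. Together with the previous paragraph this produces the sandwich $B\subset A\subset C$ with all the asserted properties.

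The hardest part will be the realisation step, and within it the simultaneous control of the $K$-theory (including the distinguished class $[1]$) of the crossed product through the Pimsner--Voiculescu sequence and of the amenability, respectively non-amenability, together with outerness, of the action realising the required $K$-theoretic endomorphisms. A secondary technical point is the existence of a non-exact, simple, unital, separable, purely infinite \Cs-algebra $KK$-equivalent to $\mathbb{C}$ --- precisely what makes the right-hand inclusion simultaneously wild and $KK$-trivial --- for which one appeals to the known constructions of non-exact \Cs-algebras $KK$-equivalent to nuclear ones. One must also verify that the untwisting is compatible with all three conclusions of the Main Theorem (rigidity, absence of intermediate algebras, $KK$-triviality). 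Finally, the classification inputs formally confine the argument to the UCT class; since no Kirchberg algebra outside it is known, this is not a genuine restriction.
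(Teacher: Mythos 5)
Your overall architecture (realise $A$ and $B$ as reduced crossed products by $\mathbb{F}_\infty$, then feed these into the Main Theorem / Theorem \ref{Thmint:Main}, with a non-exact unital simple separable \Cs-algebra $KK$-equivalent to $\mathbb{C}$ supplying the non-exactness of $C$) is close in spirit to the paper's, and your untwisting and homotopy arguments for the right-hand inclusion are sound. But there is a genuine gap at the point you dismiss in your last sentence: both your realisation step (i) ($D\rca{\beta}\mathbb{F}_\infty\cong A$ on the nose) and your identification $\mathcal{K}_1\cong A$ for the left-hand inclusion invoke the Kirchberg--Phillips classification \emph{applied to $A$ itself}, hence require $A$ to satisfy the UCT. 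The theorem is asserted for \emph{every} Kirchberg algebra, and whether every Kirchberg algebra satisfies the UCT is open; "no counterexample is known" does not discharge the hypothesis, it makes your proof conditional. The paper avoids this entirely by never classifying $A$: it forms $(A\otimes C\otimes\mathcal{O}_\infty)\rca{1\otimes\alpha\otimes\beta}\mathbb{F}_\infty\cong A\otimes\bigl[(C\otimes\mathcal{O}_\infty)\rca{\alpha\otimes\beta}\mathbb{F}_\infty\bigr]$ with the \emph{trivial} action on the $A$-factor, classifies only the bootstrap-class factor $(C\otimes\mathcal{O}_\infty)\rca{\alpha\otimes\beta}\mathbb{F}_\infty$ (stably $\cong\mathcal{O}_\infty$), cuts by a projection $p\in C\rca{\alpha}\mathbb{F}_\infty$ generating $\mathrm{K}_0$, and then uses the Kirchberg $\mathcal{O}_\infty$-absorption theorem $A\otimes\mathcal{O}_\infty\cong A$, which holds with no UCT assumption. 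The sandwich $B\subset A\subset C$ is then realised inside one tower of corners $p[(A\otimes C)\rca{}\mathbb{F}_\infty]p\subset p[(A\otimes C\otimes\mathcal{O}_\infty)\rca{}\mathbb{F}_\infty]p\subset p[(A\otimes C\otimes\mathcal{O}_\infty\otimes D)\rca{}\mathbb{F}_\infty]p$, with Lemma \ref{Lem:rigidcorner} and \cite{Suz19}, Lemma 5.2 transporting rigidity and absence of intermediate algebras to the corners.

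A secondary weakness is that your realisation step asks for more than the literature provides: \cite{Suz19} gives crossed-product decompositions of Kirchberg algebras only up to \emph{stable} isomorphism, and arranging the class of the unit through the Pimsner--Voiculescu sequence is exactly the difficulty the corner trick is designed to absorb. If you adopt the paper's device --- tensor the trivial action on $A$ into the system and compress by a projection whose $\mathrm{K}_0$-class generates the right group --- both this issue and the UCT issue disappear simultaneously, and the rest of your argument (Dykema's simplicity theorem for the free product giving $D_0$, tensoring with $\mathcal{O}_\infty$, and applying Proposition \ref{Prop:upert2} to the trivial action) goes through essentially as you wrote it.
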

As the third application, we obtain the following ubiquitous property for some \Cs-algebras.
Although the statement holds true for more general \Cs-algebras,
we concentrate on the particularly interesting case.
\begin{Thmint}\label{Thmint:3}
Let $A$ be a unital Kirchberg algebra.
Then for any unital separable \Cs-algebra $B$, there exists an ambient \Cs-algebra $C$ of $B$
with a faithful conditional expectation which also contains $A$ as a rigid maximal \Cs-subalgebra.
\end{Thmint}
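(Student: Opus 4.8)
The plan is to deduce Theorem~\ref{Thmint:3} from the rigidity and maximality machinery behind the Main Theorem (Proposition~\ref{Prop:upert2}, Theorems~\ref{Thm:inter} and~\ref{Thm:rigid}), exploiting the freedom one has in choosing the auxiliary purely infinite \Cs-algebra that this machinery takes as input. Concretely, I would first build a simple, unital, separable, purely infinite \Cs-algebra $A_0$ that contains $B$ and admits a faithful conditional expectation onto $B$; then I would run the construction that realizes the Kirchberg algebra $A$ as a rigid maximal \Cs-subalgebra of an ambient crossed product $C$ --- the construction underlying the inclusion $A\subset C$ of Theorem~\ref{Thmint:Kir} --- but now with $A_0$ installed as a tensor factor of the coefficient algebra of $C$; finally I would compose the conditional expectations visible in $C$ so as to land on $B$.

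For the auxiliary algebra, reduced free products suffice. After replacing $B$ by $B\otimes M_2$ if necessary --- harmless, since $B\subset B\otimes M_2$ has a faithful conditional expectation --- we may take $A_0:=(B,\varphi)\ast_{\mathbb{C}}(\mathcal{O}_\infty,\psi)$, the reduced free product with respect to faithful states $\varphi,\psi$. By the Dykema--R\o rdam analysis of reduced free product \Cs-algebras, $A_0$ is unital, separable, simple and purely infinite, and the canonical conditional expectation $A_0\to B$ attached to the free product is faithful because $\psi$ is. Being a separable unital \Cs-algebra, $A_0$ admits an approximately inner $\mathbb{F}_\infty$-action (for instance the trivial one, which is even inner), so it is legitimate input for Proposition~\ref{Prop:upert2}.

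The substantive step is fitting the given Kirchberg algebra $A$ into the framework as the designated rigid maximal subalgebra. Unlike the AF case of \cite{Da}, $A$ carries no useful inductive-limit description, and it is not literally a reduced crossed product $D\rca{\beta}\mathbb{F}_\infty$ --- the Pimsner--Voiculescu-type six-term sequence already obstructs $\mathcal{O}_\infty\cong D\rca{\beta}\mathbb{F}_\infty$. One circumvents this using the absorption $A\cong A\otimes\mathcal{O}_\infty$ together with the abundance and flexibility of $\mathbb{F}_\infty$-\Cs-dynamical systems, so that $A$ emerges (up to isomorphism) as the rigid maximal subalgebra of the ambient crossed product furnished by Theorems~\ref{Thm:inter} and~\ref{Thm:rigid}, the auxiliary algebra being the chosen $A_0$ and the coefficient algebra of $C$ accordingly having the form $A_0\otimes D'$. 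Making this realization work \emph{uniformly over all} Kirchberg algebras, with no appeal to the UCT, is where the real effort goes; this is the step I expect to be the main obstacle, and it is presumably the core of the proof of Theorem~\ref{Thmint:Kir} as well.

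Granting it, the assembly is routine. Since $C$ is a reduced crossed product whose coefficient algebra is $A_0\otimes D'$, the composite of the canonical conditional expectation of the reduced crossed product onto its coefficient algebra, the slice map $A_0\otimes D'\to A_0$ determined by a faithful state on $D'$, and the faithful conditional expectation $A_0\to B$ from the second step (followed by $\mathrm{id}_B\otimes\mathrm{tr}_{M_2}$ if the replacement of $B$ by $B\otimes M_2$ was made) is a faithful conditional expectation $C\to B$. Hence $C$ is an ambient \Cs-algebra of $B$ with a faithful conditional expectation which, by construction, contains $A$ as a rigid maximal \Cs-subalgebra, which is Theorem~\ref{Thmint:3}.
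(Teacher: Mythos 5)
Your overall architecture coincides with the paper's: embed $B$ into a unital simple separable purely infinite \Cs-algebra $P$ with a faithful conditional expectation by a reduced free product construction, equip $P$ with the highly transitive inner $\mathbb{F}_\infty$-action of Proposition \ref{Prop:upert2}, realize $A$ as a corner of a reduced crossed product sitting inside the corresponding larger crossed product, and compose conditional expectations. (A minor difference: the paper takes $P:=\bigl((B,\varphi)\ast(C[0,1],\psi)\bigr)\otimes\mathcal{O}_\infty$, obtaining simplicity from Dykema's theorem applied to the diffuse factor $C[0,1]$ and pure infiniteness from the tensor factor $\mathcal{O}_\infty$; this avoids having to verify pure infiniteness of a free product with $\mathcal{O}_\infty$ directly, which is what your choice of $A_0$ would require.)

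The genuine gap is precisely the step you flag as ``where the real effort goes'': you do not supply the mechanism that realizes an arbitrary unital Kirchberg algebra $A$ as the small algebra of the inclusion, and without it the argument is a strategy outline rather than a proof. In the paper this step is in fact short and needs neither the UCT nor any classification data of $A$: by the proof of Theorem 5.1 of \cite{Suz19} there exist an amenable action $\alpha\colon\mathbb{F}_\infty\curvearrowright D$ on a unital Kirchberg algebra $D$ and a projection $p\in D$ with $p(D\rca{\alpha}\mathbb{F}_\infty)p\cong\mathcal{O}_\infty$; tensoring with the \emph{trivial} action on $A$ gives $p\bigl((A\otimes D)\rca{1\otimes\alpha}\mathbb{F}_\infty\bigr)p\cong A\otimes\mathcal{O}_\infty\cong A$ by the Kirchberg $\mathcal{O}_\infty$-absorption theorem, so the only input about $A$ is $\mathcal{O}_\infty$-absorption. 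The ambient algebra is then $C:=p[(A\otimes D\otimes P)\rca{1\otimes\alpha\otimes\beta}\mathbb{F}_\infty]p$ with $\beta$ the inner action on $P$; note that in the Main Theorem's notation it is $P$, the algebra containing $B$, that plays the role of the transitive purely infinite tensor factor, while $A\otimes D$ plays the role of the coefficient algebra. Finally, since everything is cut down by the projection $p$, maximality and rigidity do not follow from Theorems \ref{Thm:inter} and \ref{Thm:rigid} alone but require the corner-passage results (Lemma 5.2 of \cite{Suz19} and Lemma \ref{Lem:rigidcorner}), a point your sketch does not address.
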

 It is easy to see from the proof that $\rg(\mathbb{F}_\infty)$ also has the same property.
Moreover one can show that the \emph{free group factor} $L(\mathbb{F}_\infty)$
has the analogous property by a similar method (Remark \ref{Rem:LF}).
It would be interesting to ask if the other free group factors $L(\mathbb{F}_n)$; $n=2, 3, \ldots$ have the same property.

A key ingredient of the proofs is amenable actions of $\mathbb{F}_\infty$ on Kirchberg algebras \cite{Suzeq}, \cite{Suz19}.
The existence of amenable actions (of non-amenable groups) on \emph{simple} \Cs-algebras does not seem to have been believed for a long time (cf.~\cite{AD02}, \cite{BO}) until \cite{Suzeq}.
(It is notable that such actions do \emph{not} exist in the von Neumann algebra context; see \cite{AD79}, Corollary 4.3.)
To exclude intermediate \Cs-algebras of the reduced crossed product inclusions,
as stated in the Main Theorem, we perturb actions by inner automorphisms.
In contrast to commutative \Cs-algebras, purely infinite simple \Cs-algebras have sufficiently many
inner automorphisms (by Cuntz's result \cite{Cun}; see Lemma 2.4 below).
This provides
amenable actions on Kirchberg algebras which sufficiently mix projections.
(Note that by Zhang's theorem \cite{Zha}, purely infinite simple \Cs-algebras are of real rank zero. In particular they have plenty of nontrivial projections. This theorem plays a prominent role throughout the article.)
Another important advantage of using Kirchberg algebras
is their freedom of K-theory:
in contrast to the fact that the K-groups of compact spaces
are restricted (for instance their K$^0$-groups must have a non-trivial order structure), there is no structural restriction on the K-groups of Kirchberg algebras.
This leads to useful reduced crossed product decompositions of
Kirchberg algebras by $\mathbb{F}_\infty$ (up to stable isomorphism) \cite{Suz19}.

\subsection*{Organization of the paper}
In Section \ref{Sec:inn}, we develop techniques on inner perturbations of \Cs-dynamical systems.
This provides \Cs-dynamical systems with extremely transitive properties.
In Section \ref{Sec:proof}, we study restrictions of intermediate objects of certain structures
associated with \Cs-dynamical systems obtained in Section \ref{Sec:inn}.
In Section \ref{Sec:rigidity},
we discuss rigidity properties of inclusions.
In particular, after improving the constructions of inner perturbations in Section \ref{Sec:inn},
we complete the proof of the Main Theorem.
Finally, in Section \ref{Sec:Kir}, we prove the consequences of the Main Theorem (Theorems \ref{Thmint:Main} to \ref{Thmint:3}).

To handle the non-unital case, we need technical results,
which we discuss in the Appendix.
As a byproduct, we extend
the tensor splitting theorem \cite{Zac}, \cite{Zsi} (cf.~\cite{GK}) to the non-unital case.

Finally, we remark that, except for some cases, the crossed product splitting theorem obtained in \cite{Suz19} is \emph{not available} because of the failure of central freeness.
Our method to exclude intermediate \Cs-algebras
is a sophisticated version of the argument developed in our previous work \cite{Suzmin}. Because of non-commutativity and K-theoretic obstructions,
we need technical improvements.

For basic facts on \Cs-algebras and discrete groups, we refer the reader to the book \cite{BO}.
For basic facts on K-theory and KK-theory, see the book \cite{Bla}.
\subsection*{Notations}Here we fix some notations.
Notations not explained in the article should be very common in operator algebra theory.
\begin{itemize}
\item For $\epsilon>0$ and for two elements $x$, $y$ of a \Cs-algebra,
denote by $x\approx_{\epsilon} y$ if $\|x -y\| <\epsilon$.
\item The symbols `$\otimes$', `$\rc$', `$\rtimes_{\rm alg}$' stand
for the minimal tensor products (of \Cs-algebras and completely bounded maps) and the reduced \Cs- and algebraic crossed products respectively.

\item For a \Cs-algebra $A$, denote by $A^{\rm p}$, $A_+$
the set of projections and the cone of positive elements in $A$ respectively.
\item For a unital \Cs-algebra $A$, denote by $A^{\rm u}$
the group of unitary elements in $A$.
\item For a \Cs-algebra $A$, denote by $\mathcal{M}(A)$, $Z(A)$, $A\sd$
the multiplier algebra of $A$, the center of $A$, and the second dual of $A$ respectively.
\item When there is an obvious \Cs-algebra embedding $A \rightarrow \mathcal{M}(B)$,
we regard $A$ as a \Cs-subalgebra of $\mathcal{M}(B)$ via the obvious embedding.
Such a situation often occurs in the tensor product, the free product, and the crossed product constructions.
\item For the reduced crossed product $A \rc \Gamma$ and $s\in \Gamma$, denote by $u_s$ its canonical implementing unitary element in $\mathcal{M}(A \rc \Gamma)$.
\item For the reduced crossed product $A \rc \Gamma$,
denote by $E\colon A\rc \Gamma \rightarrow A$ the conditional expectation
satisfying $E(a u_s)=0$ for all $a\in A$ and $s\in \Gamma \setminus \{e\}$ (called the canonical conditional expectation).
\item For a \Cs-dynamical system $\alpha \colon \Gamma \curvearrowright A$,
denote by $A^\alpha$ the fixed point algebra of $\alpha$:
\[A^\alpha:=\{ a\in A: \alpha_s(a)=a {\rm~for~all~}s\in \Gamma\}.\]
\item For two \Cs-dynamical systems $\alpha \colon \Gamma \curvearrowright A$ and $\beta \colon \Gamma \curvearrowright B$,
denote by $\alpha \otimes \beta$ the diagonal action of $\alpha$ and $\beta$, 
that is, the action $\Gamma \curvearrowright A \otimes B$ defined to be $(\alpha \otimes \beta)_s := \alpha_s \otimes \beta_s$ for $s\in \Gamma$.
\item For a \Cs-algebra $A$, 
denote by $1$ the unit of $\mathcal{M}(A)$.
\item For a unital \Cs-algebra $A$, denote by $\mathbb{C}$ the subspace of $A$ spanned by $1$.
\end{itemize}

\section{Inner perturbations of \Cs-dynamical systems}\label{Sec:inn}
In this section, we develop techniques on \emph{inner perturbations} of
free group \Cs-dynamical systems.
This provides \Cs-dynamical systems with an extreme transitivity;
see Proposition \ref{Prop:upert}.
The results in this section play crucial roles in the proof of the Main Theorem.

We first introduce the following metric spaces of projections in a \Cs-algebra.
\begin{Def}
Let $A$ be a \Cs-algebra.
For $x_1, x_2 \in {\mathrm K}_0(A)$,
we define
\[{\rm P}(A; x_1, x_2):= \left\{(p_1, p_2) \in (A^{\rm p} \setminus \{0 \})^2:
[p_1]_0=x_1,\ [p_2]_0=x_2,\ p_1 \perp p_2,\ p_1 + p_2 \neq 1\right\}\]
(possibly empty).
We equip ${\rm P}(A; x_1, x_2)$ with the metric given by the \Cs-norm on $A\oplus A$.
\end{Def}
Note that each ${\rm P}(A; x_1, x_2)$ is closed in $A\oplus A$.
Observe that every automorphism $\alpha$ on $A$ which acts trivially on the K$_0$-group induces an isometric homeomorphism
\[(p_1, p_2) \mapsto (\alpha(p_1), \alpha(p_2))\]
 on each ${\rm P}(A; x_1, x_2)$.

In this paper we employ the following definition of amenability for \Cs-dynamical systems which is introduced by Anantharaman-Delaroche \cite{AD}.
\begin{Def}[see \cite{AD}, Definition 4.1]\label{Def:ame}
A \Cs-dynamical system $\alpha \colon \Gamma \curvearrowright A$ is said to be \emph{amenable}
if the induced action $\Gamma \curvearrowright Z(A\sd)$ is amenable in the von Neumann algebra sense.
\end{Def}
Although there is another property of \Cs-dynamical systems called amenable (see e.g., \cite{BO}),
in this paper amenability always means the property in Definition \ref{Def:ame} without specified.
In this paper, we do not use the definition directly, but use the following facts.
\begin{enumerate}
\item
It is clear from the definition that when one of \Cs-dynamical systems $\alpha$, $\beta$ of $\Gamma$ is amenable, so is $\alpha \otimes \beta$.
\item When the underlying \Cs-algebra is nuclear,
amenability of \Cs-dynamical systems is equivalent to the nuclearity of the reduced crossed product
(\cite{AD}, Theorem 4.5).
\end{enumerate}

We give one more basic property of amenability.
This immediately follows from the definition,
but plays an important role in this paper.
Before giving the statement, we recall and introduce a few definitions.

Recall that an automorphism $\alpha$ of a \Cs-algebra $A$
is said to be \emph{inner} if there exists $u\in \mathcal{M}(A)^{\rm u}$
satisfying $\alpha(x)=\ad(u)(x):=uxu^\ast$ for all $x\in A$.
Denote by ${\rm Inn}(A)$ the group of inner automorphisms of $A$.
For two \Cs-dynamical systems $\alpha, \beta \colon \Gamma \curvearrowright A$, we say
that \emph{$\beta$ is an inner perturbation of $\alpha$} if
$\beta_s \circ \alpha_{s}^{-1} \in {\rm Inn}(A)$ for all $s\in \Gamma$.
Note that, as ${\rm Inn}(A)$ forms a normal subgroup in the automorphism group of $A$, to check that $\beta$ is an inner perturbation of $\alpha$,
we only need to confirm the condition on a generating set of $\Gamma$.
\begin{Lem}\label{Lem:ame}
Amenability of \Cs-dynamical systems is stable under inner perturbations.
\end{Lem}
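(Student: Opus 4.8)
The plan is to show that an inner perturbation changes neither the center $Z(A\sd)$ nor the action of $\Gamma$ on it; since amenability of a \Cs-dynamical system is, by Definition \ref{Def:ame}, purely a property of this induced action on $Z(A\sd)$, the conclusion is then immediate.

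In detail, I would proceed as follows. Suppose $\beta$ is an inner perturbation of $\alpha$, and choose unitaries $u_s \in \mathcal{M}(A)$ with $\beta_s = \ad(u_s)\circ \alpha_s$ for every $s \in \Gamma$. Passing to the biduals and identifying $\mathcal{M}(A)$ with its canonical image in the von Neumann algebra $A\sd$, we get $(\beta_s)\sd = \ad(u_s)\circ (\alpha_s)\sd$ as automorphisms of $A\sd$, where I use the standard fact that the bitranspose of $\ad(u)$ is implemented on $A\sd$ by the same unitary $u \in \mathcal{M}(A)\subset A\sd$ (both maps are $\sigma$-weakly continuous and agree on the $\sigma$-weakly dense subalgebra $A$). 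Now $\ad(u_s)$, being inner, restricts to the identity on the center: for $z \in Z(A\sd)$ one has $\ad(u_s)(z) = u_s z u_s^\ast = z$ since $z$ commutes with $u_s$. Restricting the above identity to $Z(A\sd)$ therefore gives
\[(\beta_s)\sd|_{Z(A\sd)} = (\alpha_s)\sd|_{Z(A\sd)} \qquad \text{for all } s\in \Gamma,\]
so the induced actions $\Gamma\curvearrowright Z(A\sd)$ determined by $\alpha$ and by $\beta$ are literally the same action. By Definition \ref{Def:ame}, $\beta$ is amenable if and only if this common action is amenable in the von Neumann algebra sense, that is, if and only if $\alpha$ is amenable.

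I do not anticipate any serious obstacle here: the only step deserving a word of care is the compatibility of the canonical embedding $\mathcal{M}(A)\hookrightarrow A\sd$ with taking biduals of inner automorphisms, and this is routine. (Alternatively, one can bypass multipliers by noting that $\beta_s\circ\alpha_s^{-1}$, being inner, fixes every closed two-sided ideal of $A$, which already pins down its bidual on the central projections coming from such ideals; but the direct computation with $u_s$ above is the cleanest route.)
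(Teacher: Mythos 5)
Your argument is correct and is exactly the paper's proof: the paper's one-line justification is that inner automorphisms of $A$ induce the identity on $Z(A\sd)$, which is precisely the observation you establish in detail. No further comment is needed.
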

\begin{proof}
Inner automorphisms on a \Cs-algebra $A$ induce the identity map on $Z(A\sd)$.
\end{proof}
We remark that ${\rm Inn}(A)$ in Lemma \ref{Lem:ame} is not replaceable by
its pointwise norm closure (\emph{the group of approximately inner automorphisms}).
In fact, when the acting group is a non-commutative free group, by \cite{KOS},
any \Cs-dynamical system on
a simple separable \Cs-algebra admits a non-amenable approximately inner
 perturbation (see the Proposition in \cite{Suzfp} for details and an application).

We next recall the following basic observation.
The proof is essentially contained in \cite{Cun} and should be well-known but we include it for completeness.
\begin{Lem}\label{Lem:trans}
Let $A$ be a purely infinite simple \Cs-algebra.
Then for any $x_1, x_2 \in {\mathrm K}_0(A)$,
the induced action ${\rm Inn}(A) \curvearrowright {\rm P}(A; x_1, x_2)$
is transitive.
\end{Lem}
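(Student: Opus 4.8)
The plan is to reduce transitivity of the action ${\rm Inn}(A) \curvearrowright {\rm P}(A; x_1, x_2)$ to the following assertion: given any $(p_1,p_2), (q_1,q_2) \in {\rm P}(A; x_1, x_2)$, there is a unitary $u \in \mathcal{M}(A)^{\rm u}$ with $u p_i u^\ast = q_i$ for $i=1,2$, so that $\ad(u)$ carries the first pair to the second. First I would invoke Cuntz's theorem \cite{Cun} (cf.\ \cite{Ror}): in a purely infinite simple \Cs-algebra two \emph{nonzero} projections are Murray--von Neumann equivalent exactly when they have the same class in ${\mathrm K}_0$. Since $[p_i]_0 = [q_i]_0 = x_i$ and all four projections are nonzero, this yields partial isometries $v_i \in A$ with $v_i^\ast v_i = p_i$ and $v_i v_i^\ast = q_i$. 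Using $p_1 \perp p_2$ and $q_1 \perp q_2$, the cross terms $v_1^\ast v_2$, $v_1 v_2^\ast$ and their adjoints vanish, so $v := v_1 + v_2$ is a partial isometry with $v^\ast v = p$, $v v^\ast = q$, and $v p_i v^\ast = q_i$, where $p := p_1 + p_2$ and $q := q_1 + q_2$. Thus $v$ already implements the desired matching on the ``support projections''; what remains is to extend $v$ to a unitary of $\mathcal{M}(A)$.

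For this I need a partial isometry $w \in \mathcal{M}(A)$ with $w^\ast w = 1 - p$ and $w w^\ast = 1 - q$; then a short computation (using $v = qvp$, $w = (1-q)w(1-p)$ and $w p_i = w(1-p)p_i = 0$) shows that $u := v + w$ is a unitary of $\mathcal{M}(A)$ with $u p_i u^\ast = v p_i v^\ast = q_i$. This is exactly where the remaining defining conditions of ${\rm P}(A; x_1, x_2)$ are used: $p_1 + p_2 \neq 1$ (resp.\ $q_1+q_2\neq 1$) forces $1 - p \neq 0$ (resp.\ $1 - q \neq 0$). If $A$ is unital, then $1-p$ and $1-q$ are nonzero projections of $A$ with $[1-p]_0 = [1]_0 - x_1 - x_2 = [1-q]_0$, so Cuntz's theorem again supplies such a $w \in A$. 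If $A$ is non-unital, then $A$ is stable by Zhang's theorem \cite{Zha}, and a standard absorption argument in stable \Cs-algebras gives $1 - r \sim 1$ in $\mathcal{M}(A)$ for \emph{every} projection $r \in A$; in particular $1 - p \sim 1 \sim 1 - q$, so $w$ exists in $\mathcal{M}(A)$.

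The only genuinely delicate point is the non-unital case of the last step, where $1-p$ and $1-q$ live in $\mathcal{M}(A)$ and are not controlled by ${\mathrm K}_0(A)$, so one must appeal to the structure theory of stable simple purely infinite algebras rather than to Cuntz's comparison inside $A$. Everything else — the Murray--von Neumann equivalences and the verification that $v + w$ is a unitary of $\mathcal{M}(A)$ conjugating $p_i$ to $q_i$ — is routine bookkeeping.
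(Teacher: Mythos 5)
Your argument is correct in substance but takes a genuinely different route from the paper's. You construct the implementing unitary directly: Murray--von Neumann equivalences $v_i$ supplied by Cuntz's comparison theorem, summed to a partial isometry carrying $(p_1,p_2)$ to $(q_1,q_2)$, then completed to a unitary of $\mathcal{M}(A)$ by a complementary partial isometry between $1-p$ and $1-q$. The paper never touches $\mathcal{M}(A)$ beyond the elementary Lemma 7.2.2 of \cite{BO}: it first shows that each ${\rm Inn}(A)$-orbit is \emph{open} in ${\rm P}(A;x_1,x_2)$ (nearby projections are conjugate by a unitary close to $1$), and then uses Zhang's approximate unit of projections to conjugate $p_1+p_2$ strictly below some $e_j$, reducing everything to the unital corner $e_jAe_j$, where Cuntz's Section 1 of \cite{Cun} applies verbatim. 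Your approach buys a one-step explicit unitary; the paper's buys that all comparison happens inside a unital corner of $A$ itself, so neither the non-unital version of Cuntz's theorem nor any structure theory of multiplier algebras is needed.

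The step to firm up is exactly the one you flag. In the non-unital case you invoke ``$A$ is stable by Zhang's theorem,'' but the reference \cite{Zha} used in this paper is Zhang's real-rank-zero result, not the dichotomy ``$\sigma$-unital purely infinite simple $\Rightarrow$ unital or stable''; both that dichotomy and the absorption $1-r\sim 1$ in $\mathcal{M}(A)$ for projections $r\in A$ (which comes from Zhang's analysis of multiplier algebras, e.g.\ the fact that all projections in $\mathcal{M}(A)\setminus A$ are equivalent) genuinely require $\sigma$-unitality, which the lemma as stated does not assume. Since every application in the paper has $A$ separable this costs nothing in practice, but as written your proof covers only the $\sigma$-unital case, whereas the corner reduction proves the lemma in the stated generality. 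For the same reason, your appeal to \cite{Cun} to produce the $v_i$ inside a non-unital $A$ already presupposes the non-unital extension of Cuntz's comparison theorem (cf.\ \cite{Ror}), which is itself usually proved by precisely the corner reduction the paper performs.
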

\begin{proof}
Note first that in the unital case, the statement follows from \cite{Cun}, Section 1.
To consider the non-unital case, we first show that each orbit of the action is open in ${\rm P}(A; x_1, x_2)$.
Let $(p_1, p_2), (q_1, q_2) \in {\rm P}(A; x_1, x_2)$ be given.
Assume that $\|p_1 - q_1\|, \|p_2 -q_2\|<1/12$.
By Lemma 7.2.2 in \cite{BO},
there is $u \in \mathcal{M}(A)^{\rm p}$
with $q_1=up_1u^\ast$, $\|1-u\|< 1/3$.
This implies $\|up_2u^\ast - q_2\|< 1$.
Applying Lemma 7.2.2 in \cite{BO} to 
the projections $up_2u^\ast$, $q_2$ in the \Cs-algebra $(1-q_1)\mathcal{M}(A)(1-q_1)$,
we obtain $v\in \mathcal{M}(A)^{\rm u}$
with $vup_2u^\ast v^\ast =q_2$, $vq_1=q_1$.
Now it is clear that $\ad(vu)(p_i) =q_i$ for $i=1, 2$.

By \cite{Zha}, $A$ admits a (not necessary increasing) approximate unit $(e_j)_{j \in J}$ consisting of projections.
Thus for any $(p_1, p_2)\in {\rm P}(A; x_1, x_2)$, by standard applications of functional calculus
and the observation in the previous paragraph,
for any sufficiently large $j \in J$, one can find $\alpha \in {\rm Inn}(A)$
with $\alpha(p_1+p_2) \lneq e_j$.
This reduces the proof to the unital case,
and thus completes the proof.
\end{proof}
Now we are able to show the following result.
We say that a group action $\Gamma \curvearrowright X$
on a topological space is \emph{minimal}
if all $\Gamma$-orbits are dense in $X$.
\begin{Prop}\label{Prop:upert}
Let $\alpha \colon \mathbb{F}_\infty \curvearrowright A$ be a \Cs-dynamical system on
a separable purely infinite simple \Cs-algebra $A$ whose induced action on ${\mathrm K}_0(A)$ is trivial.
Then there exists an inner perturbation
$\beta$ of $\alpha$ satisfying the following conditions.
\begin{enumerate}[\upshape(1)]
\item For any $x_1, x_2 \in {\mathrm K}_0(A)$, the induced action
$\mathbb{F}_\infty \curvearrowright {\rm P}(A; x_1, x_2)$ of $\beta$
is minimal.
\item Let $S_\beta$ denote the set of all $p \in A^{\rm p}$ whose stabilizer subgroup of $\beta$ contains at least two
canonical generating elements of $\mathbb{F}_\infty$.
Then $S_\beta$ is dense in $A^{\rm p}$ in norm.
\end{enumerate}
\end{Prop}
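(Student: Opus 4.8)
\emph{Plan of proof.} The idea is to build $\beta$ by prescribing its values on a fixed free generating set $(s_n)_{n\in\mathbb{N}}$ of $\mathbb{F}_\infty$. Since $\mathbb{F}_\infty$ is free, any choice of automorphisms $\beta_{s_n}\in{\rm Aut}(A)$ determines a unique action $\beta\colon\mathbb{F}_\infty\curvearrowright A$, and by the remark preceding Lemma \ref{Lem:ame} it suffices, in order that $\beta$ be an inner perturbation of $\alpha$, to take $\beta_{s_n}=\ad(u_n)\circ\alpha_{s_n}$ with $u_n\in\mathcal{M}(A)^{\rm u}$. Any such $\beta_g$ is then trivial on ${\mathrm K}_0(A)$ (inner automorphisms are, and $\alpha$ is by hypothesis), hence restricts to an isometric self-homeomorphism of every ${\rm P}(A;x_1,x_2)$ by the observation following the Definition. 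As $A$ is separable, ${\mathrm K}_0(A)$ is countable, so there are only countably many sets ${\rm P}(A;x_1,x_2)$, each of them separable, being a subspace of $A\oplus A$. Using freeness once more, I would fix a partition of $(s_n)_n$ into three countably infinite families $(t_n)_n$, $(r_k)_k$, $(r_k')_k$, and arrange condition (1) using $(t_n)_n$ and condition (2) using the pairs $(r_k,r_k')$, so that the two constructions do not interact.

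For (1): for each $(x_1,x_2)\in{\mathrm K}_0(A)^2$ with ${\rm P}(A;x_1,x_2)\neq\emptyset$ fix a countable dense subset $D_{x_1,x_2}\subset{\rm P}(A;x_1,x_2)$, and assign to each of the (countably many) quadruples $(x_1,x_2,a,b)$ with $a,b\in D_{x_1,x_2}$ a distinct generator $t_{n(x_1,x_2,a,b)}$. Since $\alpha_{t_n}$ preserves nonzero projections, orthogonality, the relation ``sum $\neq 1$'' and ${\mathrm K}_0$-classes, one has $\alpha_{t_n}(a)\in{\rm P}(A;x_1,x_2)$, so Lemma \ref{Lem:trans} yields $u_n\in\mathcal{M}(A)^{\rm u}$ such that $\ad(u_n)$ maps $\alpha_{t_n}(a)$ to $b$ in ${\rm P}(A;x_1,x_2)$; thus $\beta_{t_n}(a)=b$ for $n=n(x_1,x_2,a,b)$. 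Now given $P\in{\rm P}(A;x_1,x_2)$ and a nonempty open $U\subseteq{\rm P}(A;x_1,x_2)$, pick $b\in D_{x_1,x_2}$ and $\delta>0$ with the $\delta$-ball around $b$ contained in $U$, then pick $a\in D_{x_1,x_2}$ with $\|P-a\|<\delta$; since $\beta_{t_n}$ is isometric on ${\rm P}(A;x_1,x_2)$ (with $n=n(x_1,x_2,a,b)$), we get $\|\beta_{t_n}(P)-b\|=\|P-a\|<\delta$, hence $\beta_{t_n}(P)\in U$. So every $\beta$-orbit in ${\rm P}(A;x_1,x_2)$ is dense, which is (1).

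For (2): fix a sequence $(p_k)_k$ dense in $A^{\rm p}$. If $p_k\in\{0,1\}$ then $p_k$ is fixed by every $\beta_g$, so $p_k\in S_\beta$. If $p_k\notin\{0,1\}$, then, $A$ being purely infinite and simple, there is a nonzero projection $q$ with $q\perp p_k$ and $p_k+q\neq 1$, so $(p_k,q)\in{\rm P}(A;x_1,x_2)$ for suitable $x_1,x_2$, and likewise $(\alpha_{r_k}(p_k),\alpha_{r_k}(q))\in{\rm P}(A;x_1,x_2)$; Lemma \ref{Lem:trans} then gives $u_{r_k}\in\mathcal{M}(A)^{\rm u}$ with $\ad(u_{r_k})$ carrying $(\alpha_{r_k}(p_k),\alpha_{r_k}(q))$ to $(p_k,q)$, whence $\beta_{r_k}(p_k)=p_k$, and similarly one chooses $u_{r_k'}$ with $\beta_{r_k'}(p_k)=p_k$. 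Then the stabilizer of $p_k$ contains the two canonical generators $r_k,r_k'$, so $p_k\in S_\beta$; hence $S_\beta\supseteq\{p_k:k\}$ is dense in $A^{\rm p}$. As the three families exhaust $(s_n)_n$, the above choices define $\beta$; by the first paragraph it is an inner perturbation of $\alpha$ satisfying (1) and (2). The argument is essentially bookkeeping: the one step with genuine content is that moving one (pair of) projection(s) onto another by an inner automorphism is possible, which is exactly Lemma \ref{Lem:trans}; the only point to handle carefully is that the countably many demands coming from (1) and (2) can be met at once, and this is precisely what the freeness of $\mathbb{F}_\infty$ — allowing disjoint infinite blocks of independent generators — provides.
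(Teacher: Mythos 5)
Your proposal is correct and follows essentially the same route as the paper: both proofs prescribe $\beta$ on the free generators as $\ad(u)\circ\alpha_s$ with the unitaries supplied by Lemma \ref{Lem:trans}, using the countability of ${\mathrm K}_0(A)$ and of dense subsets of the spaces ${\rm P}(A;x_1,x_2)$ to meet all demands with disjoint generators, and using the isometry of the induced action to pass from the dense data to minimality. The only difference is bookkeeping (you split the generators into three blocks and move every $a$ to every $b$, while the paper indexes everything by one bijection and moves a base point through a dense sequence whose terms repeat to produce the stabilizers), which does not affect the substance.
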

\begin{proof}
Since any automorphism on $A$ which acts trivially on K$_0(A)$ induces an isometric
homeomorphism on each ${\rm P}(A; x_1, x_2)$,
to check condition (1),
we only need to find a dense orbit
in each ${\rm P}(A; x_1, x_2)$.

For each $x_1, x_2 \in {\mathrm K}_0(A)$,
choose a dense sequence $(p[x_1, x_2, n, 1], p[x_1, x_2, n, 2])_{n =1}^\infty$ in $ {\rm P}(A; x_1, x_2)$ such that each term appears at least twice in the sequence.
Denote by $S$ the canonical generating set of $\mathbb{F}_\infty$.
We fix a bijective map
\[f \colon {\mathrm K}_0(A) \times {\mathrm K}_0(A) \times \mathbb{N}\times \{1, 2\} \rightarrow S.\]
For each $(x_1, x_2, n) \in {\mathrm K}_0(A) \times {\mathrm K}_0(A) \times \mathbb{N}$,
choose $u[x_1, x_2, n, i] \in \mathcal{M}(A)^{\rm u}$; $i=1, 2$
satisfying
\[(\ad(u[x_1, x_2, n, 1])\circ \alpha_{f(x_1, x_2, n, 1)})(p[x_1, x_2, 1, j])= p[x_1, x_2, n, j],\]
\[(\ad(u[x_1, x_2, n, 2]) \circ\alpha_{f(x_1, x_2, n, 2)})(p[x_1, x_2, n, j])= p[x_1, x_2, n, j],\]
for $j=1, 2$ (this is possible by Lemma \ref{Lem:trans}).
For $s\in S$,
define
\[\beta_{s}:=\ad(u[f^{-1}(s)]) \circ \alpha_{s}.\]
This formula defines an inner perturbation $\beta$ of $\alpha$.
It is clear from the choice of $u$'s that $\beta$ satisfies the required conditions.
\end{proof}
\section{Transitivity conditions and absence of intermediate objects}\label{Sec:proof}
In this section, we use conditions (1) and (2) in Proposition \ref{Prop:upert} to exclude certain intermediate objects.
These two conditions can be seen as a non-commutative variant
of the property $\mathcal{R}$ defined for Cantor systems in \cite{Suzmin}, Proposition 3.3.
On the one hand, because the property $\mathcal{R}$ requires an extreme transitivity (seemingly opposite to amenability of topological dynamical systems), it seems hopeless to obtain a constructive amenable example.
On the other hand, in contrast to this, we have already obtained
constructive amenable \Cs-dynamical systems satisfying these two conditions, thanks to high non-commutativity of the underlying algebras.

A ($\mathbb{C}$-linear) subspace $X$ of a \Cs-algebra is said to be \emph{self-adjoint}
if $x^\ast \in X$ for all $x\in X$.
For a \Cs-dynamical system $\alpha \colon \Gamma \curvearrowright A$,
a subspace $X \subset A$ is said to be \emph{$\alpha$-invariant}
if it satisfies $\alpha_s(X)=X$ for all $s\in \Gamma$.

To study intermediate \Cs-algebras, we first show that, when the underlying algebra is simple and purely infinite, from condition (1),
we obtain the best possible restriction on invariant closed subspaces.
The reason why we need to study subspaces rather than just subalgebras is as follows.
For a \Cs-subalgebra $C$ of the reduced crossed product $A \rca{\alpha} \Gamma$
satisfying $u_sCu_s^\ast =C$ for all $s\in \Gamma$,
the set $E(C)$ forms an $\alpha$-invariant self-adjoint subspace of $A$, but
it is \emph{not necessary a subalgebra}.

\begin{Prop}\label{Prop:invsp}
Let $A$ be a purely infinite simple \Cs-algebra.
Let $\alpha \colon \mathbb{F}_\infty \curvearrowright A$ be a \Cs-dynamical system
which acts trivially on ${\mathrm K}_0(A)$
and satisfies condition $(1)$ in Proposition \ref{Prop:upert}.
Then $0$, $\mathbb{C}$, $A$
are the only possible $\alpha$-invariant closed
self-adjoint subspaces of $A$.
\end{Prop}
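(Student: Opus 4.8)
The plan is to show that any nonzero $\alpha$-invariant closed self-adjoint subspace $X \subset A$ that is not contained in $\mathbb{C}$ must be all of $A$. First I would reduce to a statement about projections: since $A$ has real rank zero (Zhang's theorem), the projections are norm-dense in the self-adjoint part of any hereditary structure we encounter, and more importantly we can use the metric spaces ${\rm P}(A; x_1, x_2)$ as a bookkeeping device. The key observation is that if $p_1, p_2 \in A^{\rm p}$ are orthogonal projections with $p_1 + p_2 \neq 1$, then condition (1) lets us move the pair $(p_1, p_2)$ by the $\beta$-action (here $\beta = \alpha$) arbitrarily close to \emph{any} other pair in ${\rm P}(A; [p_1]_0, [p_2]_0)$. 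Since $X$ is $\alpha$-invariant and closed, if $p_1 - p_2 \in X$ (say), then $\alpha_s(p_1) - \alpha_s(p_2) \in X$ for all $s$, and by minimality the norm-closure of this orbit contains $q_1 - q_2$ for every $(q_1, q_2)$ in the same component; differences of projections of this kind then span a dense subspace of $A$.

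The main steps, in order, would be: \emph{Step 1.} Given $0 \neq x \in X$, use self-adjointness to assume $x = x^\ast$, and use functional calculus together with real rank zero to extract from the spectral data of $x$ a nonzero projection-difference or a small perturbation thereof lying in (a set whose closed span is contained in) $X$ — more precisely, I expect to produce an element of the form $\lambda(p - q)$ with $p \perp q$ nonzero projections and $\lambda \neq 0$, after subtracting a scalar multiple of an approximate unit projection if necessary. The subtlety in the non-unital case is handled exactly as in Lemma \ref{Lem:trans}, by working below an approximate-unit projection $e_j$ and reducing to the corner $e_j A e_j$, which is unital and purely infinite simple. \emph{Step 2.} Show that $X$ contains $p - q$ for \emph{some} pair of nonzero orthogonal projections with $p + q \neq 1$: this is where the hypothesis $X \not\subset \mathbb{C}$ is used, to guarantee that after the reduction in Step 1 we do not merely land on scalars. \emph{Step 3.} Apply condition (1): the $\alpha$-orbit of $(p,q)$ is dense in the component ${\rm P}(A; [p]_0, [q]_0)$, so by $\alpha$-invariance and closedness, $X$ contains $p' - q'$ for every $(p', q')$ in that component. \emph{Step 4.} Conclude that the closed linear span of all such differences is $A$: using transitivity (Lemma \ref{Lem:trans}) and the abundance of projections in every $K_0$-class (purely infinite simple algebras realize every $K_0$-element by a projection, and in fact infinitely many mutually orthogonal ones), one checks that differences $p' - q'$ generate, as a closed subspace, all self-adjoint elements of norm $< 1$ below any given approximate-unit projection, hence all of $A$.

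I expect Step 4 to be the main obstacle, together with the K-theoretic bookkeeping that threads through it. The issue is that condition (1) only lets us move \emph{within} a fixed pair of $K_0$-classes $(x_1, x_2)$, so I cannot directly compare $p - q$ with $p' - q'$ when $[p']_0 \neq [p]_0$. The fix is to use orthogonal decompositions: given a target projection $r$, write it (or a projection close to it, or a projection equivalent to it inside a larger corner) as a sum of pieces whose $K_0$-classes match classes we already control, and exploit that $p - q \in X$ together with $q' - q \in X$ (choosing the second coordinate cleverly) yields $p - q' \in X$, telescoping across classes. One must also be careful that the condition $p_1 + p_2 \neq 1$ in the definition of ${\rm P}(A; x_1, x_2)$ is not actually a restriction in the non-unital setting and can be arranged in the unital setting by passing to a corner; this is precisely the manoeuvre already used in the proof of Lemma \ref{Lem:trans}, so it should transplant without difficulty. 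Finally, self-adjointness of $X$ is what lets us pass from real linear combinations of projection-differences to the full complex span, closing the argument.
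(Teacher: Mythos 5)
Your outline founders at Step 1, and the failure is structural rather than technical. The hypothesis on $X$ is only that it is a \emph{closed self-adjoint linear subspace} invariant under $\alpha$: it is not an algebra, so it is not closed under functional calculus, and from a self-adjoint $x \in X$ you have no way to produce spectral projections of $x$, or a difference $\lambda(p-q)$ of orthogonal projections, \emph{inside} $X$. Real rank zero lets you approximate $x$ by $\sum_i \lambda_i p_i$ in $A$, but none of the individual terms (nor any sub-sum) can be asserted to lie in $X$. The only operations that keep you in $X$ are linear combinations, adjoints, norm limits, and applications of $\alpha_s$ — and an argument confined to these operations is exactly what is needed. The paper's proof supplies it via a Powers-type averaging: normalize $h \in X$ so that $1$ is an isolated ``top'' coefficient in the approximation $h \approx \sum_i \lambda_i p_i$ with $\lambda_l = 1$ and $[p_l]_0 = [p]_0$; use condition (1) to find $\gamma_1$ which approximately fixes $p_l$ while moving $\sum_{i<l}p_i$ onto a projection orthogonal to all the $p_i$; then $\tfrac12(h+\gamma_1(h)) \in X$ is approximately $p_l$ plus an error supported on orthogonal projections whose norm has been \emph{halved}. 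Iterating drives the error to $0$, leaving (a norm-approximation of) $p_l$ in $X$, which condition (1) then transports to any target projection $p$. This averaging mechanism — which stays inside the linear invariant subspace throughout — is the idea your proposal is missing, and I do not see how to repair Steps 1–2 without it.

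Two further points. First, even granting Step 2, your Step 4 is not secure: condition (1) confines you to a single component ${\rm P}(A;x_1,x_2)$, and your ``telescoping'' only produces differences whose ${\mathrm K}_0$-classes lie in $\{x_1,x_2\}$; reaching projections of arbitrary class requires an additional argument you have not supplied (the paper avoids the issue entirely by landing directly on single projections of a prescribed class). Second, in the unital case the normalization of $h$ (arranging its spectrum to contain $0$ and $1$) requires $\mathbb{C}\subset X$, so one must first dispose of the possibility $X+\mathbb{C}=A$, $X\neq A$; the paper does this by showing that the resulting $\alpha$-invariant functional $A\to A/X\cong\mathbb{C}$ would be constant on ${\mathrm K}_0$-classes of projections and hence unbounded against an infinite orthogonal family of equivalent projections. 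Your proposal does not address this case.
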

\begin{proof}
Let $X$ be an $\alpha$-invariant closed self-adjoint subspace of $A$
different from $0$ and $\mathbb{C}$.
It suffices to show that $X=A$.
We first note that, in the unital case, the subspace $X+\mathbb{C}$ is also $\alpha$-invariant, closed, and self-adjoint.
We observe that the equality $X+\mathbb{C} =A$ implies $X=A$.
To see this, assume that $X+\mathbb{C}=A$, $X \neq A$, and denote by $\varphi$
the (nonzero bounded) linear functional on $A$ defined by
the (Banach space) quotient map $A \rightarrow A/X \cong \mathbb{C}$.
Then, since $X$ is $\alpha$-invariant, so is $\varphi$ (that is, $\varphi \circ \alpha_s=\varphi$ for all $s\in \mathbb{F}_\infty$).
It follows from condition (1) that for any two projections $p_1, p_2 \in A^{\rm p}\setminus \{0, 1\}$ with $[p_1]_0 =[p_2]_0$,
we have $\varphi(p_1)=\varphi(p_2)$.
Since 
$\varphi(1)\neq 0$, one can find
$p\in A^{\rm p}\setminus \{0, 1\}$ with
$\varphi(p)\neq 0$.
Choose pairwise orthogonal nonzero projections $(p_n)_{n=1}^\infty$ in $A$
satisfying $[p_n]_0 =[p]_0$ for all $n\in \mathbb{N}$.
Then for any $N\in \mathbb{N}$,
we have
$N|\varphi(p)|= |\varphi(\sum_{n=1}^N p_n)|\leq \|\varphi\|$.
This is a contradiction.
Thus, in the unital case, we only need to show the statement
under the additional assumption that $\mathbb{C} \subsetneq X$.

We will prove $ A^{\rm p} \subset X$ under this assumption, which
implies $X=A$ by \cite{Zha}.
Choose a self-adjoint contractive element $h$ in $X \setminus \mathbb{C}$
whose spectrum contains $0$ and $1$.
Let $\epsilon>0$ and $p \in A^{\rm p} \setminus \{0, 1\}$ be given.
Since $A$ is of real rank zero \cite{Zha} (see also \cite{BP}), there exist nonzero pairwise orthogonal projections
$p_1, \ldots, p_l$ in $A$ and a sequence $\lambda_1, \ldots, \lambda_{l-1}, \lambda_l=1$ in $[-1, 1] \subset \mathbb{R}$
satisfying \[h \approx_{\epsilon} \sum_{i=1}^l \lambda_i p_i,\qquad \sum_{i=1}^l p_i \neq 1.\]
By splitting the last term $p_l$ into two new projections if necessary,
we may assume that $[p_l]_0 = [p]_0$.
We put $G:=\{\alpha_s: s\in \mathbb{F}_\infty\}$ for short.
Take $q, r_1\in A^{\rm p} \setminus\{0, 1\}$
satisfying
\[q \perp r_1,\qquad q+r_1 \perp \sum_{i=1}^l p_i,\qquad [q]_0=[\sum_{i=1}^{l-1} p_i]_0.\]
Next we fix a real number
\[0<\delta< \min \left\{1,\ \frac{\epsilon-\|h-\sum_{i=1}^l \lambda_i p_i\|}{4l}\right\}.\]
By applying condition (1) to $(\sum_{i=1}^{l-1} p_i, p_l) \in {\rm P}(A; [q]_0, [p_l]_0)$,
we obtain $\gamma_1\in G$ satisfying
\[\sum_{i=1}^{l-1} \gamma_1(p_i) \approx_{\delta} q,\qquad \gamma_1(p_l) \approx_\delta p_l.\]
By Lemma 7.2.2 (1) in \cite{BO}, one can take  $u \in \mathcal{M}(A)^{\rm u}$
satisfying \[\|u-1\|< 4\delta,\qquad u\left(\sum_{i=1}^{l-1}\gamma_1(p_i)\right)u^\ast =q.\]
Set $q_i:= u \gamma_1(p_i) u^\ast (\approx_{8\delta} \gamma_1(p_i))$ for $i=1, \ldots, l-1$.
Then, since $|\lambda_i| \leq 1$ for all $i$, we have
\[\sum_{i=1}^l \lambda_i \gamma_1(p_i)\approx_{8l\delta} p_l + \sum_{i=1}^{l-1} \lambda_i q_i.\] 
Set
\[x_2:= \frac{1}{2}\sum_{i=1}^{l-1} \lambda_i(p_i +q_i) \in (1-p_l-r_1)A(1-p_l-r_1).\] 
Note that $x_2$ is self-adjoint and $\|x_2\|\leq 1/2$.
Since $4l\delta + \|h-\sum_{i=1}^l \lambda_i p_i\|<\epsilon$, we obtain
\[h_2:=\frac{1}{2}(h+ \gamma_1(h)) \approx_{\epsilon} p_l + x_2.\]
Next we apply the same argument to $p_l + x_2$ and $\epsilon - \| h_2-(p_l + x_2)\|$ instead of $h$ and $\epsilon$ (with the same $p_l$).
As a result we obtain $\gamma_2\in G$, $r_2\in A^{\rm p} \setminus \{0, 1\}$ with $r_2 \perp p_l$, and a  self-adjoint element $x_3$ in $(1-p_l-r_2)A(1-p_l-r_2)$ satisfying
\[h_3 := \frac{1}{2}[h_2+ \gamma_2(h_2)]\approx_\epsilon p_l + x_3,\qquad \|x_3\|\leq \frac{1}{2^2}.\]
Fix $N \in \mathbb{N}$ satisfying $2^{-(N-1)} <\epsilon$. 
By iterating this argument $N$ times,
we finally obtain $h_N \in X$ and $x_N \in (1- p_l)A(1-p_l)$
satisfying
\[h_N \approx_{\epsilon} p_l + x_N,\qquad \|x_N\|\leq \frac{1}{2^{N-1}}<\epsilon.\]
Choose $\gamma \in G$ satisfying $\gamma(p_l)\approx_{\epsilon} p$ (which exists by condition (1)).
We then obtain $p \approx_{3\epsilon} \gamma(h_N) \in X$.
Since $\epsilon>0$ is arbitrary, we conclude $p\in X$.
\end{proof}
\begin{Rem}\label{Rem:sf}
In the stably finite case, one cannot expect to find actions satisfying the conclusion of Proposition \ref{Prop:invsp}.
Indeed, let $A$ be a non-commutative \Cs-algebra.
Then, for any non-empty set $S$ of tracial states on $A$, the subspace $\bigcap_{\alpha\in \mathrm{Aut}(A)}\bigcap_{\tau \in S} \ker(\tau\circ \alpha) \subset A$
is proper, closed, self-adjoint, and invariant under $\mathrm{Aut}(A)$.
(Note that by the Hahn--Banach theorem, this subspace is nonzero.)
\end{Rem}

We now combine the subspace restriction obtained in Proposition \ref{Prop:invsp}
with condition (2) to effectively apply the Powers averaging argument \cite{Pow}, \cite{HS}.
As a result, we obtain strong restrictions of some reduced crossed product inclusions associated with \Cs-dynamical systems obtained in Proposition \ref{Prop:upert}.
\begin{Thm}\label{Thm:inter}
Let $\alpha \colon \mathbb{F}_\infty \curvearrowright A$
be an action on a purely infinite simple \Cs-algebra
satisfying conditions $(1)$ and $(2)$ in Proposition \ref{Prop:upert}.
Let $\beta \colon \mathbb{F}_\infty \curvearrowright B$ be an action on a simple \Cs-algebra with $B^\beta \neq 0$.
Then $0$, $B \rca{\beta} \mathbb{F}_\infty$, $(A\otimes B) \rca{\alpha \otimes \beta}\mathbb{F}_\infty$ are the only possible \Cs-subalgebras of $(A\otimes B) \rca{\alpha \otimes \beta}\mathbb{F}_\infty$ invariant under multiplications by
$B \rca{\beta} \mathbb{F}_\infty$.
\end{Thm}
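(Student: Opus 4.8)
The plan is to run a Powers averaging argument for the "bimodule" $M := B\rca{\beta}\mathbb{F}_\infty$ acting on $N := (A\otimes B)\rca{\alpha\otimes\beta}\mathbb{F}_\infty$ by left and right multiplication, using the canonical conditional expectation $E\colon N\to A\otimes B$ together with the slice maps coming from $B^\beta\neq 0$. So let $C\subset N$ be an $M$-invariant \Cs-subalgebra with $C\neq 0$; I want to show $C\supseteq M$, and then that the only such $C$ are $M$ and $N$. First I would record that $C$ is in particular invariant under conjugation by the unitaries $u_s$, $s\in\mathbb{F}_\infty$ (these lie in $M\subset\mathcal{M}(N)$), so $E(C)$ is an $(\alpha\otimes\beta)$-invariant closed self-adjoint subspace of $A\otimes B$. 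Next, fixing a nonzero positive $b_0\in B^\beta$ with, say, $\|b_0\|=1$, and a state $\psi$ on $B$, the slice-type map $\id_A\otimes\psi\colon A\otimes B\to A$ intertwines $\alpha\otimes\beta$ with $\alpha$ on the relevant invariant subspaces after first cutting down by $1\otimes b_0$; more precisely I would check that $(\id_A\otimes\psi)\big((1\otimes b_0)E(C)(1\otimes b_0)\big)$ is an $\alpha$-invariant closed self-adjoint subspace of $A$, hence by Proposition~\ref{Prop:invsp} equals $0$, $\mathbb{C}$, or $A$.

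The heart of the argument is to promote "$C$ contains something with nontrivial $E$" to "$C\supseteq M$". Here condition~(2) in Proposition~\ref{Prop:upert} enters: the set $S_\beta$ of projections $p\in A^{\rm p}$ whose stabilizer contains two free generators $s,t$ of $\mathbb{F}_\infty$ is norm-dense in $A^{\rm p}$. For such a $p$ and any fixed $\beta$-fixed projection (or positive element) $q\in B$, the element $p\otimes q$ is fixed by the diagonal action of the subgroup $\langle s,t\rangle\cong\mathbb{F}_2$, so the standard Powers trick applies inside $N$: averaging $x$ over finitely many words in $u_s,u_t$ of length $\le n$ pushes the off-diagonal part (with respect to $E$) to zero in norm, uniformly, since $\langle s,t\rangle$ is free and $p\otimes q$ commutes with $u_s,u_t$. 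Thus for $x\in C$ one gets $(p\otimes q)E(x)(p\otimes q)$ in the norm closure of a convex combination of $M$-conjugates of $(p\otimes q)x(p\otimes q)\in C$ — so that cut-down of $E(x)$ already lies in $C$. Feeding this through the slice map and Proposition~\ref{Prop:invsp} as above, and then using density of $S_\beta$ and of projections (Zhang \cite{Zha}) to remove the cutdowns $p\otimes q$, I would conclude that $E(C)\cap (A\otimes B)$ is either contained in $1\otimes B$ (the case "$=\mathbb{C}$ after slicing", giving $C\subseteq M$ by a further argument below) or all of $A\otimes B$ (the case "$=A$", giving $C\supseteq A\otimes B$, hence $C\supseteq M$ since $M$ is generated by $1\otimes B$ and the $u_s$, and the $u_s$ are recovered from $A\otimes B$ plus the averaging).

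Finally I would clean up the dichotomy. If $E(C)\subseteq 1\otimes B$: then because $C$ is invariant under $u_s$-conjugation and the Fourier coefficients $E((1\otimes b)u_{s}^{-1}x)$ all land in $1\otimes B$ as well, a routine Fourier-support argument shows $C\subseteq B\rca{\beta}\mathbb{F}_\infty=M$; combined with $C\supseteq M$ (from the Powers step applied to $1\otimes B^\beta\neq 0$, which is always present) this forces $C=M$. If instead $C\supseteq A\otimes B$, then since $C$ is $M$-invariant and hence $u_s$-invariant, and $u_s\in\mathcal{M}(A\otimes B\,\mbox{-bimodule})$... — more carefully: $C\supseteq A\otimes B$ and $C$ closed under left/right multiplication by $M\ni u_s$ gives $C\supseteq (A\otimes B)u_s$ for all $s$, hence $C=N$. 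The main obstacle I anticipate is the non-unital bookkeeping: the cut-downs $p\otimes q$ never equal the unit, so one must carry approximate-unit arguments (using that $A$, and hence $A\otimes B$, has an approximate unit of projections by \cite{Zha}) through the Powers averaging uniformly in the approximation parameter, exactly the kind of technical improvement over \cite{Suzmin} flagged in the introduction; getting the quantitative estimates in the Powers trick to survive the cutdown-and-slice composition is where the real work lies.
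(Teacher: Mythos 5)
Your overall architecture (dichotomy on $E(C)$, slice maps plus Proposition \ref{Prop:invsp}, Powers averaging over a free subgroup stabilizing the relevant element, then density of $S_\beta$ and real rank zero to finish) matches the paper's, but the central Powers step as you have written it does not work, and the missing repair is exactly the technical heart of the paper's proof. You claim that for $x\in C$, averaging $(p\otimes q)x(p\otimes q)$ over words in $u_{s},u_{t}$ yields $(p\otimes q)E(x)(p\otimes q)$ in the closed convex hull of conjugates, "since $p\otimes q$ commutes with $u_s,u_t$." The Powers argument kills the off-diagonal part of the element being averaged, but it replaces the diagonal part $d:=E\bigl((p\otimes q)x(p\otimes q)\bigr)=(p\otimes q)E(x)(p\otimes q)$ by the Ces\`aro average $\frac1m\sum_i(\alpha\otimes\beta)_{g_i}(d)$. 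Only $p\otimes q$ is fixed by $\langle s,t\rangle$; $d$ itself is not, so the average is generically neither $d$ nor even controllably nonzero (the $g_i$ are chosen adaptively to excise the off-diagonal part, with no control over where they send $d$). Hence this step produces neither $(p\otimes q)E(x)(p\otimes q)\in C$ nor any usable nonzero element of $C\cap(A\otimes B)$.

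The fix requires first manufacturing, for each $p\in S_\beta$ and $b\in(B^\beta)_+$, an element $c\in C$ with $E(c)\approx_\epsilon p\otimes b$ \emph{before} averaging, so that the diagonal part is genuinely $\langle s_1,s_2\rangle$-fixed and survives the averaging intact. The paper does this in two moves you omit: starting from the density in $A$ of the span of the slices $(\id_A\otimes\varphi)(E(c))$ (obtained from Proposition \ref{Prop:invsp} together with the easy half of Theorem \ref{Thm:TS}, which rules out the subspace collapsing into $\mathbb{C}$), it uses the Akemann--Anderson--Pedersen excision theorem to replace each slice by $b_iE(c_i)b_i\approx(\id_A\otimes\varphi_i)(E(c_i))\otimes b_i^2$, and then Lemma \ref{Lem:simple} to transport the $b_i^2$ onto a fixed $b\in(B^\beta)_+$ using only multipliers from $B\subset B\rca{\beta}\mathbb{F}_\infty$, under which $C$ is invariant. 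Only after that is Powers averaging applied, to $c_0-p\otimes b$ with $E(c_0)=p\otimes b$. Your proposal correctly identifies all the surrounding scaffolding (including the non-unital case $E(C)\subset B\Rightarrow C=0$ or $C\subseteq B\rca{\beta}\mathbb{F}_\infty$ by the Fourier-support argument and simplicity), but without the excision-and-transfer step the argument has a genuine gap at its core.
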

Thus, when we additionally assume that $A$ is unital,
the reduced crossed product inclusion  $B \rca{\beta} \mathbb{F}_\infty \subset (A\otimes B) \rca{\alpha \otimes \beta}\mathbb{F}_\infty$ has no intermediate \Cs-algebras.
\begin{proof}
Let $C$ be a \Cs-subalgebra of $(A\otimes B) \rca{\alpha \otimes \beta} \mathbb{F}_\infty$ as in the statement.
We first consider the case that $E(C) \subset B$.
When $A$ is non-unital, this implies $C=0$.
When $A$ is unital, since $C=Cu_s$ for all $s\in \mathbb{F}_\infty$,
by Proposition 3.4 of \cite{Suz17},
we have $C\subset B \rca{\beta} \mathbb{F}_\infty$.
Since $B \rca{\beta} \mathbb{F}_\infty$ is simple (\cite{HS}, Theorem I), this yields $C=0$ or $C= B \rca{\beta} \mathbb{F}_\infty$.

We next consider the case $E(C)\not\subset B$.
Observe that $E(C)$ is an $(\alpha \otimes \beta)$-invariant self-adjoint subspace of $A\otimes B$.
Hence the elements of the form
$(\id_A \otimes \varphi)(E(c))$,
where $\varphi$ is a pure state on $B$ and $c \in C$,
span an $\alpha$-invariant self-adjoint subspace of $A$.
By (the easy part of) Theorem \ref{Thm:TS}, this subspace is not contained in $\mathbb{C}$.
Therefore, by Proposition \ref{Prop:invsp}, 
for any $\epsilon>0$ and any $p\in A^{\rm p}$, one can choose pure states $\varphi_1, \ldots, \varphi_n$ on $B$
and $c_1, \ldots, c_n \in C \setminus\{0\}$ satisfying
\[\sum_{i=1}^n(\id_A \otimes \varphi_i)(E(c_i))\approx_\epsilon p.\]
By the Akemann--Anderson--Pedersen excision theorem \cite{AAP} (\cite{BO}, Theorem 1.4.10)
(applied to each $\varphi_i$), for each $i=1, \ldots, n$,
one can choose $b_i \in B_+$ satisfying
\[\|b_i\|=1,\qquad
b_i E(c_i) b_i \approx_{\epsilon/2n} [(\id_A \otimes \varphi_i)(E(c_i))]\otimes b_i^2.\]
We fix an element $b\in (B^\beta)_{+}$ with $\|b\|=1$.
By applying Lemma \ref{Lem:simple} to each $b_i^2$ in $B$, we obtain finite sequences $(v_{i, j})_{j=1}^{l(i)}$, $i=1, \ldots, n$, in $B$
satisfying
\[\|\sum_{j=1}^{l(i)} v_{i, j} b_i^2 v_{i, j}^\ast - b\|<\frac{\epsilon}{2n\|c_i\|},\qquad \sum_{j=1}^{l(i)} v_{i, j} v_{i, j}^\ast \leq 1.\]
Set $x_{i, j}:= v_{i, j}b_i \in B \subset \mathcal{M}((A\otimes B) \rca{\alpha \otimes \beta}\mathbb{F}_\infty)$ for $i=1, \ldots, n$ and $j=1, \ldots, l(i)$.
We then obtain
\[ E(\sum_{i=1}^n\sum_{j=1}^{l(i)} x_{i, j} c_i x_{i, j}^\ast)=\sum_{i=1}^n\sum_{j=1}^{l(i)} x_{i, j} E(c_i) x_{i, j}^\ast \approx_{\epsilon} \left[\sum_{i=1}^n (\id_A \otimes \varphi_i)(E(c_i))\right]\otimes b \approx_{\epsilon}  p \otimes b.\]
Summarizing the result, we have shown that, for any $p\in A^{\rm p}$, any $b\in (B^\beta)_+$, and any $\epsilon>0$, there exists $c\in C$ satisfying
\[E(c) \approx_\epsilon p \otimes b.\]

Fix $p\in A^{\rm p}$, $\epsilon>0$, $b\in (B^\beta)_+\setminus \{0\}$,
and take $c\in C$ satisfying $E(c) \approx_\epsilon p \otimes b$.
We further assume that the stabilizer subgroup of $p$
contains at least two canonical generating elements $s_1$, $s_2$ of $\mathbb{F}_\infty$.
Choose $c_0 \in (A \otimes B) \ac \mathbb{F}_\infty$ satisfying
$c_0 \approx_{\epsilon} c$, $E(c_0)=p \otimes b$.
We apply the Powers argument \cite{Pow}, \cite{HS} to $c_0 - p\otimes b$ by using $s_1, s_2$ (cf.~  \cite{Suzmin}, Lemma 3.8).
As a result, we obtain a sequence $g_1, \ldots, g_m$ in $\langle s_1, s_2 \rangle$ (the subgroup generated by $s_1$ and $s_2$)
satisfying
\[\frac{1}{m} \sum_{i=1}^m u_{g_i} c_0 u_{g_i}^\ast \approx_{\epsilon} p \otimes b.\]
This implies 
\[ p \otimes b\approx_{2\epsilon}\frac{1}{m} \sum_{i=1}^m u_{g_i} c u_{g_i}^\ast \in C.\]
Since $\epsilon>0$ is arbitrary,
we conclude $p \otimes b\in C$.
Since $\alpha$ satisfies condition (2), $A$ is of real rank zero \cite{Zha}, and $B$ is simple, we obtain
$C=(A \otimes B) \rca{\alpha\otimes \beta}\mathbb{F}_\infty$.
\end{proof}

\section{Rigidity properties of inclusions}\label{Sec:rigidity}
We establish a rigidity of automorphisms for inclusions obtained in Theorem \ref{Thm:inter}.
It is notable that the same property plays an important role in the proof of the Galois correspondence theorem of Izumi--Longo--Popa \cite{ILP}
(see also \cite{Lon}).
We then slightly modify Proposition \ref{Prop:upert} to give rigid inclusions.
This completes the proof of the Main Theorem.

By using the spectral $M$-subspaces (see \cite{Ped}, Definition 8.1.3),
we obtain the following result from Proposition \ref{Prop:invsp}.
\begin{Lem}\label{Lem:comm}
Let $\alpha \colon \mathbb{F}_\infty \curvearrowright A$ be a \Cs-dynamical
system as in Proposition \ref{Prop:invsp}.
Assume that $A$ is unital.
Then there is no non-trivial automorphism of $A$
commuting with $\alpha$.
\end{Lem}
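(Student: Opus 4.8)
The plan is to reduce the statement to the subspace rigidity already established in Proposition \ref{Prop:invsp}, using the spectral $M$-subspace machinery to linearise the problem. Suppose $\theta \in \mathrm{Aut}(A)$ commutes with $\alpha$, i.e.\ $\theta \circ \alpha_s = \alpha_s \circ \theta$ for all $s \in \mathbb{F}_\infty$. The key observation is that for each automorphism $\theta$ one can attach, via Pedersen's spectral subspace theory (see \cite{Ped}, Definition 8.1.3), a family of closed subspaces of $A$ that are canonically determined by $\theta$; and because $\theta$ commutes with every $\alpha_s$, these subspaces are automatically $\alpha$-invariant. Concretely, consider $\theta$ as generating an action of $\mathbb{Z}$ (or, after passing to $\theta \otimes \theta^{-1}$ on $A \otimes A$ if one prefers a bounded-orbit picture, but $\mathbb{Z}$ is enough here) and look at the fixed-point subspace
\[
A^\theta = \{ a \in A : \theta(a) = a \},
\]
together with its complementary spectral pieces. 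First I would note that $A^\theta$ is a closed self-adjoint subspace of $A$ (in fact a \Cs-subalgebra), and that it is $\alpha$-invariant: if $\theta(a) = a$ then $\theta(\alpha_s(a)) = \alpha_s(\theta(a)) = \alpha_s(a)$. By Proposition \ref{Prop:invsp}, $A^\theta$ is therefore one of $0$, $\mathbb{C}$, $A$. Since $1 \in A^\theta$ and $A^\theta \neq 0$, and since $A^\theta = A$ would give $\theta = \id$ and we would be done, the only case to rule out is $A^\theta = \mathbb{C}$.

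To eliminate $A^\theta = \mathbb{C}$ I would pass to the spectral subspaces of $\theta$ away from the eigenvalue $1$. The point is that $\theta \neq \id$ forces the spectrum of $\theta$ (acting on $A$, or on a suitable weak-$\ast$ completion) to contain a point $\lambda \neq 1$, and the associated spectral $M$-subspace $A_\theta(\{\lambda\})$ (or, more robustly, $A_\theta(F)$ for a closed set $F \subset \mathbb{T} \setminus \{1\}$ with nonempty interior) is a nonzero closed subspace of $A$. Again commutation of $\theta$ with $\alpha$ gives $\alpha_s(A_\theta(F)) = A_\theta(F)$ for all $s$, because the $\alpha_s$ commute with the spectral projections of the group generated by $\theta$. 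Now I would symmetrise to get a \emph{self-adjoint} invariant subspace: replace $A_\theta(F)$ by $X := \overline{\mathrm{span}}\{ x + x^\ast : x \in A_\theta(F)\} + \overline{\mathrm{span}}\{i(x - x^\ast) : x \in A_\theta(F)\}$, which is closed, self-adjoint, $\alpha$-invariant, and nonzero. By Proposition \ref{Prop:invsp} again, $X \in \{0, \mathbb{C}, A\}$. If $X = A$ then $A$ is generated by spectral subspaces disjoint from the eigenvalue $1$, which contradicts $1 \in A$ being fixed (a standard spectral-decomposition argument: $1$ lies in $A_\theta(\{1\})$, which is orthogonal to $A_\theta(F)$ in the appropriate Arveson-spectral sense, so $1 \notin A$, absurd); and if $X = \mathbb{C}$, that forces $\mathbb{C} \subset A_\theta(F)$ up to the $\ast$-operation, hence $1$ or $i \cdot 1$ would have spectral support inside $F \not\ni 1$, again a contradiction. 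The remaining case $X = 0$ means $A_\theta(F)$ consists of skew-adjoint-free elements, and running this over enough closed sets $F$ covering $\mathbb{T} \setminus \{1\}$ pins the whole spectrum of $\theta$ to $\{1\}$, whence $A = A_\theta(\{1\}) = A^\theta$ and $\theta = \id$.

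The main obstacle I anticipate is the careful handling of spectral subspaces of a single automorphism on a \Cs-algebra that need not be the enveloping von Neumann algebra: Arveson's and Pedersen's spectral subspace theory is cleanest in the \Ws-setting, so one must either work in $A\sd$ and then descend, checking that the relevant subspaces intersect $A$ nontrivially, or invoke the specific form in \cite{Ped}, Section 8.1, that applies to \Cs-algebras directly. A clean way to sidestep some of this: since $A$ is unital and $\theta \in \mathrm{Aut}(A)$, the map $a \mapsto \int_{\mathbb{T}} \lambda^{-n}\, \theta^n(a)\, d\mu$ against suitable measures produces elements of prescribed spectral support, and the nonvanishing of the relevant ones is exactly what $\theta \neq \id$ guarantees. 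The bookkeeping of which spectral piece is nonzero, and matching it against the $0/\mathbb{C}/A$ trichotomy, is the only place real care is needed; everything else is the formal transport of $\alpha$-invariance through constructions that are natural in $\theta$, hence commute with the $\alpha_s$.
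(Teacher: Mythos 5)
Your proposal is correct and is essentially the paper's own argument: the paper likewise forms the Pedersen spectral $M$-subspaces $M^\theta(K)$ for closed symmetric $K\subset\widehat{\mathbb{Z}}$, notes that they are closed, self-adjoint and $\alpha$-invariant because $\theta$ commutes with $\alpha$, uses the trichotomy of Proposition \ref{Prop:invsp} together with the fact that $1\in M^\theta(K)$ only when $1\in K$ to force $M^\theta(K)=0$ for every compact $K\subset\widehat{\mathbb{Z}}\setminus\{1\}$, and then cites Pedersen's Theorem 8.1.4 and Corollary 8.1.8 to conclude $M^\theta(\{1\})=A$ and $\theta=\mathrm{id}_A$. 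Your preliminary detour through the fixed-point algebra $A^\theta$ is unnecessary, and the symmetrisation you do by hand is handled in the paper simply by restricting to sets $K$ with $K^{-1}=K$, but the substance of the argument is the same.
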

\begin{proof}
Let $\beta$ be an automorphism of $A$ commuting with $\alpha$.
Then for any closed subset $K \subset \widehat{\mathbb{Z}}$ with $K^{-1}=K$,
the $M$-subspace
$M^\beta(K)$ forms a closed self-adjoint $\alpha$-invariant subspace of $A$.
Since $1\in M^\beta(\{1\})$, by Theorem 8.1.4 (iv), (ix) of \cite{Ped}, we have $1\not\in M^\beta(K)$ whenever $1\not\in K$.
By Proposition \ref{Prop:invsp}, this forces that $M^\beta(K)=0$ for
any compact subset $K\subset  \widehat{\mathbb{Z}} \setminus \{1\}$.
Consequently, by Theorem 8.1.4 (iii), (vii), (viii), (ix) of \cite{Ped},
we have $M^\beta(\{1\})=A$.
Corollary 8.1.8 in \cite{Ped} now yields $\beta=\id_A$.
\end{proof}
For a completely positive map $\Phi \colon A \rightarrow B$
between \Cs-algebras,
when it extends to a completely positive map $\mathcal{M}(A) \rightarrow \mathcal{M}(B)$ which is strictly continuous on the unit ball,
we denote by $\Phi^{\mathcal{M}}$ such a (unique) extension.
Such an extension exists if $\Phi$ maps
an approximate unit of $A$ to that of $B$; see Corollary 5.7 in \cite{Lan}.
It is obvious that all completely positive maps appearing below
satisfy this condition.
\begin{Prop}\label{Prop:rigid}
Let $\alpha, \beta$ be as in Theorem \ref{Thm:inter}.
Assume that $A$ is unital.
Then the inclusion $C:=B \rca{\beta} \mathbb{F}_\infty \subset (A\otimes B) \rca{\alpha \otimes \beta}\mathbb{F}_\infty:=D$ has the following property.
If two automorphisms $\gamma_1$, $\gamma_2$ of $D$ coincide on $C$,
then $\gamma_1=\gamma_2$.
\end{Prop}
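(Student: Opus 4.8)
The plan is to set $\gamma := \gamma_2^{-1}\circ\gamma_1 \in \mathrm{Aut}(D)$; since $\gamma$ (and hence $\gamma^{-1}$) restricts to the identity on $C$, it suffices to prove $\gamma = \id_D$. Write $E \colon D \to A\otimes B$ for the canonical conditional expectation, which is $\ad(u_s)$-equivariant, and note $\ad(u_s)|_{A\otimes B} = (\alpha\otimes\beta)_s$. Since $1\otimes B \subset C$ and each $u_s$ lies in $\mathcal{M}(C)$, the automorphism $\gamma$ fixes $1\otimes B$ pointwise and, at the multiplier level when $B$ is non-unital, fixes every $u_s$; in particular $\gamma$ commutes with every $\ad(u_s)$.

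The core step is to show $\gamma(A\otimes B) = A\otimes B$. Fix $b \in (B^\beta)_+$, and use Proposition \ref{Prop:upert}(2) applied to $\alpha$ to obtain a norm-dense set of projections $p \in A^{\rm p}$ whose $\alpha$-stabilizer contains two canonical generators $s_1, s_2$; let $H := \langle s_1, s_2\rangle \cong \mathbb{F}_2$ for such a $p$. As $\beta_g(b) = b$ for all $g$ and $\alpha_g(p)=p$ for $g \in H$, we get $u_g(p\otimes b)u_g^\ast = p\otimes b$, hence $u_g\,\gamma(p\otimes b)\,u_g^\ast = \gamma(p\otimes b)$ for all $g\in H$. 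Put $x := \gamma(p\otimes b) - E(\gamma(p\otimes b)) \in D$; then $E(x) = 0$ and, by equivariance of $E$, $u_g x u_g^\ast = x$ for all $g \in H$. Approximating $x$ in norm by an element of $(A\otimes B)\ac\mathbb{F}_\infty$ with vanishing $E$-image and applying the Powers averaging argument along $H$ exactly as in the proof of Theorem \ref{Thm:inter} (cf.~\cite{Pow}, \cite{HS}, \cite{Suzmin}), one finds $g_1, \dots, g_m \in H$ with $\|\frac{1}{m}\sum_i u_{g_i} x u_{g_i}^\ast\|$ arbitrarily small; the left side equals $x$, so $x = 0$, i.e. $\gamma(p\otimes b) = E(\gamma(p\otimes b)) \in A\otimes B$. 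Since $A\otimes B$ is closed, this holds for every $p \in A^{\rm p}$ by density. Now $A$ has real rank zero \cite{Zha}, so $A = \overline{\mathrm{span}}\,A^{\rm p}$, while $B$ is simple with $B^\beta \neq 0$, so $\overline{\mathrm{span}}\,(B B^\beta B) = B$; consequently the elements $p\otimes b$ ($p\in A^{\rm p}$, $b \in (B^\beta)_+$) together with $1\otimes B$ generate $A\otimes B$ as a \Cs-algebra. As $\gamma$ is a $\ast$-homomorphism carrying all these generators into $A\otimes B$, we get $\gamma(A\otimes B) \subseteq A\otimes B$, and applying this to $\gamma^{-1}$ yields $\gamma(A\otimes B) = A\otimes B$.

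It then remains to compute $\gamma|_{A\otimes B}$. It is an automorphism of $A\otimes B$ fixing $1\otimes B$ pointwise; since $B$ is simple (so that $1\otimes B$ has relative commutant $A\otimes 1$ in $\mathcal{M}(A\otimes B)$), the tensor splitting theorem (Theorem \ref{Thm:TS}) gives $\gamma|_{A\otimes B} = \theta \otimes \id_B$ for a unique $\theta \in \mathrm{Aut}(A)$. Because $\gamma$ commutes with $\ad(u_s) = (\alpha\otimes\beta)_s$ on $A\otimes B$, we obtain $(\theta\circ\alpha_s) \otimes \beta_s = (\alpha_s\circ\theta)\otimes\beta_s$, hence $\theta\circ\alpha_s = \alpha_s\circ\theta$ for all $s$. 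As $A$ is unital and $\alpha$ satisfies the hypotheses of Proposition \ref{Prop:invsp}, Lemma \ref{Lem:comm} forces $\theta = \id_A$, so $\gamma$ is the identity on $A\otimes B$. Since $\gamma$ also fixes every $u_s$ and $D$ is generated by $A\otimes B$ and $\{u_s : s \in \mathbb{F}_\infty\}$, we conclude $\gamma = \id_D$, that is, $\gamma_1 = \gamma_2$.

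The main obstacle is the middle step: a priori $\gamma(A\otimes B)$ is merely some copy of $A\otimes B$ sitting inside $D$ in an uncontrolled position, and one must recognise it as the canonical coefficient algebra. The mechanism is that the transitivity condition (2) supplies projections $p$ with large stabilizers, for which a single round of Powers averaging already forces $\gamma(p\otimes b)$ into $A\otimes B$, after which simplicity of $B$ propagates this from the $\beta$-fixed part to all of $A\otimes B$. Once the coefficient algebra is known to be preserved, everything reduces, via tensor splitting, to the rigidity of $\alpha$ in Lemma \ref{Lem:comm}. Some extra care is needed when $B$ is non-unital — passing to multiplier algebras for the statements about the $u_s$ and about the relative commutant of $1\otimes B$ — but these are routine and are handled by the Appendix.
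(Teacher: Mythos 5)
Your argument is correct and follows the same overall strategy as the paper's proof: reduce to an automorphism $\gamma$ fixing $C$, show that the coefficient algebra is preserved by a Powers averaging argument exploiting the projections with large $\alpha$-stabilizers from condition (2), identify $\gamma|_{A\otimes B}$ with $\theta\otimes\id_B$ via the relative commutant of $B$ in $\mathcal{M}(A\otimes B)$, and kill $\theta$ with Lemma \ref{Lem:comm}. The one place where you genuinely diverge is the middle step, and your version is a clean alternative. The paper first shows that $E^{\mathcal{M}}(\gamma^{\mathcal{M}}(A))$ is a dense subspace of $A$ (via Proposition \ref{Prop:invsp}), then averages $\gamma(x\otimes b)$ to land \emph{on} $p\otimes b$, obtaining $A\otimes b\subset\gamma^{\mathcal{M}}(A)b$, and finally needs Lemma \ref{Lem:simple} to strip off the $b$ and conclude $A\subset\gamma^{\mathcal{M}}(A)$. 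You instead average the off-diagonal part $x=\gamma(p\otimes b)-E(\gamma(p\otimes b))$, which is fixed by $\ad(u_g)$ for $g$ in the stabilizer, directly to $0$; this gives the \emph{into} containment $\gamma(p\otimes b)\in A\otimes B$, and you recover equality by symmetry in $\gamma^{\pm1}$ together with the observation that $1\otimes B$ and the elements $p\otimes b$ generate $A\otimes B$ (using \cite{Zha} and simplicity of $B$ with $B^\beta\neq0$). This buys you a shorter route: you avoid both the density step through Proposition \ref{Prop:invsp} and the use of Lemma \ref{Lem:simple} at this point (though Proposition \ref{Prop:invsp} still enters through Lemma \ref{Lem:comm}). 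Two small cautions: your citation of the tensor splitting theorem (Theorem \ref{Thm:TS}) for the factorization $\gamma|_{A\otimes B}=\theta\otimes\id_B$ is not quite the right reference — that theorem is about subalgebras — but the parenthetical relative-commutant computation $\mathcal{M}(A\otimes B)\cap B'=A\otimes1$ (which the paper proves inline using $Z(\mathcal{M}(B))=\mathbb{C}$ and slice maps) is exactly what is needed, so this is only a labeling issue; and the phrase ``$\|\frac{1}{m}\sum_iu_{g_i}xu_{g_i}^\ast\|$ arbitrarily small; the left side equals $x$'' should be unpacked as an estimate through the algebraic approximant $c_0$, but the logic is sound.
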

\begin{proof}
To prove the statement, it suffices to show the following claim.
Any automorphism $\gamma$ on $D$ with $\gamma|_{C} =\id _{C}$ must be trivial.
Given such $\gamma$.
We will show that $\gamma^{\mathcal{M}}(A)=A$.
To see this, we first show that $E^{\mathcal{M}}(\gamma^{\mathcal{M}}(A)) \subset A$.
Take $x \in E^{\mathcal{M}}(\gamma^{\mathcal{M}}(A)) \subset \mathcal{M}(A\otimes B)$.
Then note that $x$ commutes with $B$ (by standard arguments on multiplicative domains).
Therefore, for any state $\varphi$ on $A$,
we have \[(\varphi \otimes \id_B)^{\mathcal{M}}(x)\in Z(\mathcal{M}(B)) = \mathbb{C}.\]
Here and below, we regard $\mathcal{M}(A)=A$ and $\mathcal{M}(B)$
as \Cs-subalgebras of $\mathcal{M}(A\otimes B)$ in the obvious way.
This shows that, for any state $\psi$ on $B$,
\[(\varphi \otimes \id_B)^{\mathcal{M}}((\id_A \otimes \psi)^{\mathcal{M}}(x))=
(\id_A \otimes \psi)^{\mathcal{M}}((\varphi \otimes \id_B)^{\mathcal{M}}(x))=(\varphi \otimes \id_B)^{\mathcal{M}}(x).\]
Observe that the maps $(\varphi \otimes \id_B)^{\mathcal{M}}$; $\varphi$ a state on $A$,
separate the points of $\mathcal{M}(A\otimes B)$.
Therefore we conclude
\[x= (\id_A \otimes \psi)^{\mathcal{M}}(x)\in A.\]
Since $\gamma^{\mathcal{M}}(u_s)=u_s$ for all $s\in \mathbb{F}_\infty$,
the unital completely positive map
\[(E^{\mathcal{M}} \circ \gamma^{\mathcal{M}})|_A \colon A \rightarrow A\] is $\mathbb{F}_\infty$-equivariant.
Proposition 3.1 shows that $E^{\mathcal{M}}(\gamma^{\mathcal{M}}(A))$ is dense in $A$.

Fix $b\in (B^\beta)_+\setminus \{0\}$ with $\|b\|=1$ and $\epsilon>0$.
Let $p \in A^{\rm p}$ be a projection whose stabilizer subgroup of $\alpha$ contains
at least two canonical generating elements $s_1$, $s_2$ of $\mathbb{F}_\infty$.
Choose $x\in A$ satisfying $E^{\mathcal{M}}(\gamma^{\mathcal{M}}(x))\approx_\epsilon p$.
By applying the Powers argument \cite{Pow}, \cite{HS} to $\gamma^{\mathcal{M}}(x) b=\gamma(x\otimes b) \in D$ by using $s_1$ and $s_2$ (cf.~the proof of Theorem \ref{Thm:inter} or \cite{Suzmin}, Lemma 3.8),
we obtain a sequence $g_1, \ldots, g_n$ in $\langle s_1, s_2 \rangle$
satisfying
\[p\otimes b \approx_\epsilon \frac{1}{n}\sum_{i=1}^n u_{g_i} \gamma(x \otimes b) u_{g_i}^\ast
=\frac{1}{n}\sum_{i=1}^n \gamma(\alpha_{g_i}(x) \otimes b).\]
Since $\epsilon>0$ is arbitrary,
we obtain $p \otimes b \in  \gamma(A \otimes b)$.
(Note that $\gamma$ is isometric hence $\gamma(A \otimes b)$ is closed in $D$.)
By condition (2) of $\alpha$ and \cite{Zha}, we obtain
\[A \otimes b \subset \gamma(A \otimes b)=\gamma^{\mathcal{M}}(A)b.\]
By Lemma \ref{Lem:simple},
one can choose a net $((v_{i, \lambda})_{i=1}^{n(\lambda)})_{\lambda \in \Lambda}$
of finite sequences in $B$ satisfying
\[\sum_{i=1}^{n(\lambda)}v_{i, \lambda} v_{i, \lambda}^\ast \leq 1 \qquad{\rm for~ all~ }\lambda\in \Lambda,\]

 \[\lim_{\lambda \in \Lambda} \sum_{i=1}^{n(\lambda)}v_{i, \lambda} b v_{i, \lambda}^\ast=1 \qquad {\rm in~ the~ strict~ topology~ of~} \mathcal{M}(D).\]

Now for any $a\in A$, choose $x\in A$ with
$a\otimes b=\gamma^{\mathcal{M}}(x) b$.
Then, for any $\lambda \in \Lambda$,
\[a\otimes \left(\sum_{i=1}^{n(\lambda)}v_{i, \lambda} b v_{i, \lambda}^\ast \right)= \gamma^{\mathcal{M}}(x) \left(\sum_{i=1}^{n(\lambda)}v_{i, \lambda} b v_{i, \lambda}^\ast \right).\]
By letting $\lambda$ tend to infinity,
we obtain $a=\gamma^{\mathcal{M}}(x)$.
Applying the same argument to $\gamma^{-1}$, we obtain $A=\gamma^{\mathcal{M}}(A)$.
Thus $\gamma^{\mathcal{M}}|_A$ defines an automorphism on $A$.
Since $\gamma^{\mathcal{M}}(u_s)=u_s$ for all $s\in \mathbb{F}_\infty$,
the automorphism $\gamma^{\mathcal{M}}|_A$ commutes with $\alpha$.
 Lemma \ref{Lem:comm} therefore implies $\gamma^{\mathcal{M}}|_A=\id_A$.
Since $A \cdot C$ generates $D$,
we conclude $\gamma=\id_{D}$.
\end{proof}
We now construct rigid inclusions.
We first recall the definition.
\begin{Def}[\cite{Ham79b}, Definition 2.4]\label{Def:rigid}
An inclusion $A \subset B$ of \Cs-algebras
is said to be \emph{rigid} if
the identity map $\id_B$ is the only completely positive map
$\Phi\colon B \rightarrow B$ satisfying $\Phi|_A=\id_A$.
\end{Def}
By slightly modifying Proposition \ref{Prop:upert} (under a stronger assumption),
we obtain \Cs-dynamical systems satisfying a stronger condition which is useful to study the rigidity
of associated inclusions.

For a separable \Cs-algebra $A$,
when we equip
the automorphism group $\mathrm{Aut}(A)$ of $A$ with the point-norm topology,
it forms a Polish group.
(The point-norm topology of $\mathrm{Aut}(A)$ is the weakest topology on $\mathrm{Aut}(A)$
making the evaluation maps $\alpha \mapsto \alpha(a) \in A$ norm continuous for all $a\in A$.)
Indeed, take a dense sequence $(a_n)_{n=1}^\infty$ in the unit ball of $A$.
Then it is not hard to see that the metric $d$ on $\mathrm{Aut}(A)$ given by
\[d(\alpha, \beta):= \sum_{n=1}^\infty \frac{1}{2^{n}}\left(\|\alpha(a_n)-\beta(a_n)\|+\|\alpha^{-1}(a_n)-\beta^{-1}(a_n)\|\right); \qquad \alpha, \beta \in\mathrm{Aut}(A),\]
 confirms the statement.
Denote by $\overline{{\rm Inn}}(A)$ the closure of the inner automorphism group ${\rm Inn}(A)$ in $\mathrm{Aut}(A)$.
We say that a \Cs-dynamical system $\alpha \colon \Gamma \curvearrowright A$ is \emph{pointwise approximately inner}
if $\alpha_s \in \overline{{\rm Inn}}(A)$ for all $s\in \Gamma$.
\begin{Prop}\label{Prop:upert2}
Let $\alpha \colon \mathbb{F}_\infty \curvearrowright A$ be a pointwise  approximately inner \Cs-dynamical system on
a separable purely infinite simple \Cs-algebra $A$.
Then there exists an inner perturbation
$\beta$ of $\alpha$ satisfying the following conditions.
\begin{enumerate}[\upshape(1)]
\item The set $\{\beta_s:s\in \mathbb{F}_\infty\}$ is dense in $\overline{{\rm Inn}}(A)$.
\item Let $S_\beta$ denote the set of all $p \in A^{\rm p}$ whose stabilizer subgroup of $\beta$ contains at least two
canonical generating elements of $\mathbb{F}_\infty$.
Then $S_\beta$ is dense in $A^{\rm p}$ in norm.
\end{enumerate}
\end{Prop}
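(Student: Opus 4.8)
The plan is to build $\beta$ by mimicking the proof of Proposition \ref{Prop:upert}, but replacing the bookkeeping over pairs of $K_0$-classes with a bookkeeping device that ensures density in $\overline{{\rm Inn}}(A)$. First I would fix a countable dense subset $(\theta_n)_{n=1}^\infty$ of $\overline{{\rm Inn}}(A)$ (which exists since $\mathrm{Aut}(A)$ is Polish and hence so is the closed subgroup $\overline{{\rm Inn}}(A)$), together with a dense sequence $(a_k)_{k=1}^\infty$ in the unit ball of $A$ giving the metric $d$ displayed above. The key point is that each $\theta_n$, being a limit of inner automorphisms, can be approximated to within $1/m$ (in the metric $d$) by some $\ad(w_{n,m})$ with $w_{n,m}\in \mathcal M(A)^{\mathrm u}$. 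The goal is to arrange that for every $n,m$ there is a generator $s\in S=\{s_j\}$ of $\mathbb F_\infty$ with $\beta_s = \ad(w_{n,m})\circ(\text{small correction})$, so that $\{\beta_s : s\in S\}$ is dense; since $\overline{{\rm Inn}}(A)$ is a group and $\beta$ is an inner perturbation of a pointwise approximately inner action, $\{\beta_s : s\in \mathbb F_\infty\}\subset \overline{{\rm Inn}}(A)$ automatically, and density of the generators forces density of the whole orbit.

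The construction then runs as follows. Partition the generating set $S$ of $\mathbb F_\infty$ into two infinite disjoint subsets $S_1$ and $S_2$, where $S_1$ will be used to realize condition (1) and $S_2$ will be used to realize condition (2) via the same stabilizer trick as in Proposition \ref{Prop:upert}. For condition (1), fix a bijection $f\colon \mathbb N\times\mathbb N\to S_1$; for the pair $(n,m)$ with $s:=f(n,m)$, choose $v[s]\in\mathcal M(A)^{\mathrm u}$ so that $\ad(v[s])\circ\alpha_s$ approximates $\theta_n$ within $1/m$ in $d$ — this is possible because $\alpha_s\in\overline{{\rm Inn}}(A)$, so $\theta_n\circ\alpha_s^{-1}\in\overline{{\rm Inn}}(A)$ can be approximated by an inner automorphism $\ad(v[s])$, and composition with $\alpha_s$ is continuous. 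Set $\beta_s := \ad(v[s])\circ\alpha_s$ for $s\in S_1$. For condition (2), fix a dense sequence $(p_k)_{k=1}^\infty$ in $A^{\mathrm p}$, pair up the elements of $S_2$ into couples $(s_k',s_k'')$, and using Lemma \ref{Lem:trans} (transitivity of ${\rm Inn}(A)$ on $\mathrm P(A;x_1,x_2)$, applied to a decomposition $p_k = q_{k,1}\oplus q_{k,2}$ of $p_k$ into two nonzero orthogonal subprojections) choose unitaries $w_k', w_k''\in\mathcal M(A)^{\mathrm u}$ with $(\ad(w_k')\circ\alpha_{s_k'})(p_k)=p_k$ and $(\ad(w_k'')\circ\alpha_{s_k''})(p_k)=p_k$; set $\beta_{s_k'}:=\ad(w_k')\circ\alpha_{s_k'}$ and $\beta_{s_k''}:=\ad(w_k'')\circ\alpha_{s_k''}$. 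On the remaining generators (if any), put $\beta_s:=\alpha_s$. Since these assignments are made freely on the generating set $S$ of the free group, they extend (uniquely) to an action $\beta\colon\mathbb F_\infty\curvearrowright A$, and by construction $\beta_s\circ\alpha_s^{-1}\in{\rm Inn}(A)$ for every $s\in S$, so $\beta$ is an inner perturbation of $\alpha$.

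It remains to verify the two conclusions. For (2): each $p_k$ lies in $S_\beta$ because its stabilizer under $\beta$ contains the two generators $s_k', s_k''$; since $(p_k)_k$ is dense in $A^{\mathrm p}$, $S_\beta$ is norm-dense in $A^{\mathrm p}$. For (1): given $\theta\in\overline{{\rm Inn}}(A)$ and $\varepsilon>0$, pick $n$ with $d(\theta_n,\theta)<\varepsilon/2$ and $m$ with $1/m<\varepsilon/2$; then for $s=f(n,m)\in S_1$ we have $d(\beta_s,\theta)\le d(\beta_s,\theta_n)+d(\theta_n,\theta)<1/m+\varepsilon/2<\varepsilon$, so $\{\beta_s : s\in S_1\}$ is dense in $\overline{{\rm Inn}}(A)$, hence so is the larger set $\{\beta_s : s\in\mathbb F_\infty\}$. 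Finally, $\{\beta_s:s\in\mathbb F_\infty\}\subset\overline{{\rm Inn}}(A)$: each generator $\beta_s=\ad(u)\circ\alpha_s$ with $\alpha_s\in\overline{{\rm Inn}}(A)$ lies in $\overline{{\rm Inn}}(A)$ since $\overline{{\rm Inn}}(A)$ is a subgroup containing ${\rm Inn}(A)$, and closure under products of generators gives the claim.

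The main obstacle I anticipate is a subtlety in the metric on $\mathrm{Aut}(A)$ versus $\overline{{\rm Inn}}(A)$ in the non-unital case: the unitaries implementing the approximating inner automorphisms live in $\mathcal M(A)$, and one must check that $\ad(v)\circ\alpha_s$ is genuinely close to $\theta_n$ \emph{in the point-norm metric on $A$}, not merely in some strict-topology sense on $\mathcal M(A)$. This is fine because $d$ only tests finitely many $a_k$'s at a time up to arbitrarily small tail, and approximating $\theta_n\circ\alpha_s^{-1}$ by an inner automorphism in point-norm on the finitely many relevant elements is exactly what membership in $\overline{{\rm Inn}}(A)$ provides; one just has to be careful to use the \emph{same} $\varepsilon$-budget for the finitely many test elements appearing up to the truncation level. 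A secondary, purely bookkeeping point is ensuring the various index sets ($S_1$ indexed by $\mathbb N\times\mathbb N$, $S_2$ split into pairs indexed by $\mathbb N$, and possibly leftover generators) partition $S$; since $S$ is countably infinite this is routine.
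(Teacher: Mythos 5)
Your proposal is correct and follows essentially the same route as the paper: split the free generators into two infinite families, use one family (indexed by $\mathbb{N}\times\mathbb{N}$) to inner-perturb $\alpha_s$ so that the generators approximate a fixed dense sequence in $\overline{{\rm Inn}}(A)$ (using that $\theta_n\circ\alpha_s^{-1}\in\overline{{\rm Inn}}(A)$ and that composition is continuous in the Polish group $\mathrm{Aut}(A)$), and use the other family to stabilize a dense sequence of projections exactly as in Proposition \ref{Prop:upert}. The extra care you take about the point-norm metric and the containment $\{\beta_s\}\subset\overline{{\rm Inn}}(A)$ is consistent with, if slightly more detailed than, the paper's argument.
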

\begin{proof}
We split the canonical generating set $S$ of $\mathbb{F}_\infty$ into two infinite subsets: $S= S_1 \sqcup S_2$.
We first perturb $\alpha_s$; $s\in S_2$ by inner automorphisms as in the proof of Proposition \ref{Prop:upert}
to ensure condition (2).
We next choose a dense sequence $(\gamma_n)_{n=1}^\infty$ in $\overline{{\rm Inn}}(A)$.
Fix a bijective map $f\colon  \mathbb{N}\times \mathbb{N}\rightarrow S_1$.
Since each $\alpha_s$ is approximately inner,
there exist $v_s\in \mathcal{M}(A)^{\rm u}$; $s \in S_1$,
satisfying \[\lim_{m\rightarrow \infty} \ad(v_{f({n, m})})\circ\alpha_{f(n, m)}=\gamma_n
\qquad {\rm for~all~}n\in \mathbb{N}.\]
These unitary elements define the desired inner perturbation of $\alpha$.
\end{proof}
\begin{Cor}\label{Cor:ameO}
There is an amenable action of  $\mathbb{F}_\infty$ on the Cuntz algebra $\mathcal{O}_\infty$ satisfying conditions $(1)$ and $(2)$ in Proposition \ref{Prop:upert2}.
\end{Cor}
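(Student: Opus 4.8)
The plan is to obtain the desired action as an inner perturbation of an amenable one, using Proposition \ref{Prop:upert2} to install conditions (1) and (2) and Lemma \ref{Lem:ame} to carry amenability through the perturbation.

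First I would recall that $\mathcal{O}_\infty$ is a unital Kirchberg algebra; in particular it is separable, simple and purely infinite, so it is an admissible underlying algebra for Proposition \ref{Prop:upert2}. Next I would fix an amenable \Cs-dynamical system $\alpha \colon \mathbb{F}_\infty \curvearrowright \mathcal{O}_\infty$, whose existence is exactly the construction of amenable actions of $\mathbb{F}_\infty$ on Kirchberg algebras from \cite{Suzeq} (see also \cite{Suz19}).

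The key observation is that such an $\alpha$ is automatically pointwise approximately inner. Indeed, since ${\mathrm K}_0(\mathcal{O}_\infty) = \mathbb{Z}$ is generated by $[1]_0$ and ${\mathrm K}_1(\mathcal{O}_\infty) = 0$, every unital automorphism of $\mathcal{O}_\infty$ induces the trivial class in $\mathrm{KK}(\mathcal{O}_\infty, \mathcal{O}_\infty)$, hence is approximately unitarily equivalent to $\id_{\mathcal{O}_\infty}$ by the Kirchberg--Phillips classification theorem (\cite{Kir}, \cite{Phi}). A direct computation shows that approximate unitary equivalence to the identity forces membership in $\overline{{\rm Inn}}(\mathcal{O}_\infty)$: if $\ad(u_n)\to\phi$ pointwise in norm, then conjugating the relation $\ad(u_n)(a)\to\phi(a)$ by $u_n^\ast$ gives $\ad(u_n^\ast)(\phi(a))\to a$, i.e.\ $\ad(u_n^\ast)\to\phi^{-1}$ pointwise. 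Therefore $\alpha_s \in \overline{{\rm Inn}}(\mathcal{O}_\infty)$ for every $s \in \mathbb{F}_\infty$; that is, $\alpha$ is pointwise approximately inner.

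Finally I would apply Proposition \ref{Prop:upert2} to $\alpha$ to produce an inner perturbation $\beta$ of $\alpha$ satisfying conditions (1) and (2) of that proposition, and then invoke Lemma \ref{Lem:ame}: as an inner perturbation of the amenable action $\alpha$, the action $\beta$ is itself amenable. This $\beta$ is the action we want. I do not expect a genuine obstacle here: the only external input is the existence of an amenable $\mathbb{F}_\infty$-action on $\mathcal{O}_\infty$ from \cite{Suzeq}, and the one mild point worth spelling out is precisely the automatic approximate innerness of every automorphism of $\mathcal{O}_\infty$, which is immediate from its elementary $\mathrm K$-theory together with classification.
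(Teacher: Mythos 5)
Your proposal is correct and follows essentially the same route as the paper: start from the amenable action of $\mathbb{F}_\infty$ on $\mathcal{O}_\infty$ constructed in \cite{Suz19}, apply Proposition \ref{Prop:upert2} to get conditions (1) and (2), and use Lemma \ref{Lem:ame} to preserve amenability under the inner perturbation. The only (harmless) difference is in how pointwise approximate innerness of the starting action is justified: the paper reads it off directly from the construction in \cite{Suz19}, whereas you deduce it from the Kirchberg--Phillips classification (every automorphism of $\mathcal{O}_\infty$ is approximately inner, and your check that approximate unitary equivalence to the identity lands in $\overline{{\rm Inn}}(\mathcal{O}_\infty)$ for the paper's two-sided point-norm topology is a worthwhile detail), which is a valid and slightly more robust argument.
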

\begin{proof}
Recall from the proof of Theorem 5.1 of \cite{Suz19}
that $\mathbb{F}_\infty$ admits an amenable action $\alpha$ on $\mathcal{O}_\infty$.
It follows from the construction that $\alpha$ is pointwise approximately inner.
Now applying Proposition \ref{Prop:upert2} (and Lemma \ref{Lem:ame}) to $\alpha$, we obtain the desired action.
\end{proof}
\begin{Rem}
It follows from Lemma \ref{Lem:trans} that condition (1) of Proposition \ref{Prop:upert2}
is stronger than condition (1) of Proposition \ref{Prop:upert}.
\end{Rem}
\begin{Lem}\label{Lem:ucp}
Let $\alpha\colon \mathbb{F}_\infty \curvearrowright A$ be an action
on a unital purely infinite simple \Cs-algebra satisfying condition $(1)$ in Proposition \ref{Prop:upert2}.
Then there is no $\mathbb{F}_\infty$-equivariant unital completely positive map $\Phi \colon A \rightarrow A$ other than $\id_A$ $($that is, $\mathbb{C}\subset A$ is \emph{$\mathbb{F}_\infty$-rigid}$)$.
\end{Lem}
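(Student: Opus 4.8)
The plan is to show that every $\mathbb{F}_\infty$-equivariant unital completely positive map $\Phi\colon A\to A$ fixes all projections of $A$, and then conclude via the real rank zero property. First, since $\Phi$ is norm-continuous and $\Phi\circ\alpha_s=\alpha_s\circ\Phi$ for all $s\in\mathbb{F}_\infty$, a routine point-norm limit argument together with condition $(1)$ of Proposition \ref{Prop:upert2} shows that $\Phi$ commutes with every automorphism in $\overline{{\rm Inn}}(A)$; in particular $\Phi(uxu^\ast)=u\Phi(x)u^\ast$ for all $u\in A^{\rm u}$ and $x\in A$. Fix a projection $p\in A^{\rm p}\setminus\{0,1\}$ (such exist because $A$ is properly infinite). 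Applying this identity to the unitaries of the relative commutant $\{p\}'\cap A=pAp+(1-p)A(1-p)$, we see that $\Phi(p)$ commutes with $\{p\}'\cap A$, hence lies in its commutant in $A$; since $pAp$ and $(1-p)A(1-p)$ are simple and unital, that commutant equals $\mathbb{C}1+\mathbb{C}p$. Thus $\Phi(p)=\lambda_p 1+\mu_p p$ for uniquely determined scalars $\lambda_p,\mu_p$, which are real because $\Phi(p)$ is self-adjoint, and with $\lambda_p,\lambda_p+\mu_p\in[0,1]$ because $0\leq\Phi(p)\leq\Phi(1)=1$.

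Next I would determine the scalars. If $q\in A^{\rm p}\setminus\{0,1\}$ satisfies $[q]_0=[p]_0$, then $q=wpw^\ast$ for some $w\in A^{\rm u}$ (Cuntz, cf.\ the proof of Lemma \ref{Lem:trans}), so conjugating the formula above gives $\Phi(q)=\lambda_p1+\mu_pq$, whence $\lambda_q=\lambda_p$. Also, for nonzero $p_1\perp p_2$ with $p_1+p_2\neq1$, comparing the coefficients of the linearly independent elements $1,p_1,p_2$ in $\Phi(p_1+p_2)=\Phi(p_1)+\Phi(p_2)$ yields $\lambda_{p_1+p_2}=\lambda_{p_1}+\lambda_{p_2}$. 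Using Cuntz's comparison theory for projections in purely infinite simple \Cs-algebras, one can choose a projection $e\leq 1-p$ with $e\neq 1-p$ and $[e]_0=[p]_0$; then $e$ and $p+e$ lie in $A^{\rm p}\setminus\{0,1\}$, and the two facts just noted give $\lambda_{p+e}=2\lambda_p$. Iterating this doubling (with $p+e$ in place of $p$) yields projections in $A^{\rm p}\setminus\{0,1\}$ whose $\lambda$-value is $2^{k}\lambda_p$ for every $k$; since all of these lie in $[0,1]$, necessarily $\lambda_p=0$. Therefore $\Phi(p)=\mu_pp$, and then $1=\Phi(1)=\Phi(p)+\Phi(1-p)=\mu_pp+\mu_{1-p}(1-p)$ forces $\mu_p=1$ (again by linear independence of $p$ and $1-p$). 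Hence $\Phi(p)=p$ for every projection $p$ of $A$.

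Finally, since $A$ has real rank zero \cite{Zha}, its finite-spectrum self-adjoint elements are dense in the self-adjoint part of $A$; as $\Phi$ is linear and bounded and restricts to the identity on projections, $\Phi=\id_A$ follows. I expect the middle step to be the main obstacle: the bicommutant argument determines $\Phi(p)$ only up to the scalar summand $\lambda_p1$, and forcing $\lambda_p=0$ genuinely requires playing the uniform bound $\lambda_p\leq1$ (from positivity) against the comparison and doubling phenomena available only in purely infinite simple \Cs-algebras; the remaining steps are essentially formal.
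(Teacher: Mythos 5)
Your proof is correct and follows essentially the same route as the paper's: use condition (1) to get $\Phi(uxu^\ast)=u\Phi(x)u^\ast$, deduce $\Phi(p)\in\mathbb{C}1+\mathbb{C}p$ from the commutant of $pAp\oplus(1-p)A(1-p)$, kill the scalar summand by a doubling argument with orthogonal Cuntz-equivalent projections against the bound $\Phi(p)\leq 1$, and finish with Zhang's real rank zero theorem. The only difference is cosmetic bookkeeping (you track $\lambda_p$ via additivity and $K_0$-invariance, while the paper conjugates by an explicit unitary and iterates the resulting formula).
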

\begin{proof}
Let $\Phi$ be as in the statement.
By the assumption on $\alpha$, all inner automorphisms are in
the closure of $\{\alpha_s: s\in \mathbb{F}_\infty\}$ in $\mathrm{Aut}(A)$. 
Therefore, for any $u\in A^{\rm u}$ and any $x\in A$,
we have $\Phi(uxu^\ast)=u\Phi(x)u^\ast$.
Applying the equality to $x=p \in A^{\rm p}\setminus\{0, 1\}$ and unitary elements
$u$ in $pAp \oplus (1-p)A(1-p) \subset A$,
we obtain
$\Phi(p)=u\Phi(p)u^\ast$.
Thus $\Phi(p)$ commutes with $pAp \oplus (1-p)A(1-p)$.
Note that since $A$ is simple, so are $pAp$ and $(1-p)A(1-p)$.
We therefore obtain
\[\Phi(p)  = \lambda_1 p + \lambda_2 (1-p) \qquad  {\rm for~ some~} \lambda_1, \lambda_2 \geq 0.\]
We will show that $\lambda_2=0$.
Take $v\in A^{\rm u}$ which satisfies $q:=vpv^\ast \lneq 1- p$ (see \cite{Cun}).
Then \[\Phi(q)= v\Phi(p) v^\ast=\lambda_1 q+ \lambda_2(1-q).\]
This yields
\[\Phi(p+q)= \lambda_1 (p+q) + \lambda_2(1-p-q)+\lambda_2= (\lambda_1+\lambda_2)(p+q)+2\lambda_2(1-p-q).\]
By iterating this argument, for any $N\in \mathbb{N}$,
one can find $r_N\in A^{\rm p}\setminus\{0, 1\}$
satisfying
\[\Phi(r_N) =[\lambda_{1}+ (2^{N}-1)\lambda_2] r_N + 2^N\lambda_{2}(1-r_N).\]
Since $\Phi$ is contractive, this forces $\lambda_2 =0$.
Thus $\Phi(p)\leq p$ for all $p\in A^{\rm p}$.
Since $\Phi$ is unital, these inequalities imply
$\Phi(p)=p$ for all $p\in A^{\rm p}$.
Since $A^{\rm p}$ spans a dense subspace of $A$ \cite{Zha},
we conclude $\Phi=\id_A$.
\end{proof}
\begin{Thm}\label{Thm:rigid}
Let $\alpha\colon \mathbb{F}_\infty \curvearrowright A$ be a \Cs-dynamical system
on a unital purely infinite simple \Cs-algebra satisfying condition $(1)$ in Proposition \ref{Prop:upert2}.
Let $\beta \colon \mathbb{F}_\infty \curvearrowright B$ be  a \Cs-dynamical system on a simple \Cs-algebra.
Then the inclusion \[C:=B \rca{\beta} \mathbb{F}_\infty \subset (A\otimes B) \rca{\alpha \otimes \beta}\mathbb{F}_\infty:=D\] is rigid.
\end{Thm}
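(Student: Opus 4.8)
The plan is to imitate the proof of Proposition~\ref{Prop:rigid}, replacing the automorphism there by an arbitrary unital completely positive map and, correspondingly, Lemma~\ref{Lem:comm} by Lemma~\ref{Lem:ucp}. Thus I would take a completely positive map $\Phi\colon D\to D$ with $\Phi|_C=\id_C$ and aim to prove $\Phi=\id_D$. Since $A$ is unital, $1_A\otimes B\subset C$ is an approximate unit of $A\otimes B$, hence of $D$; so $\Phi$ fixes an approximate unit of $D$, the strict extension $\Phi^{\mathcal{M}}$ exists with $\Phi^{\mathcal{M}}(1)=1$, and the multiplicative domain of $\Phi^{\mathcal{M}}$ contains $C$. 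In particular $\Phi^{\mathcal{M}}$ is a $C$-bimodule map with $\Phi^{\mathcal{M}}(u_sdu_s^\ast)=u_s\Phi^{\mathcal{M}}(d)u_s^\ast$ for all $s\in\mathbb{F}_\infty$.

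The core step will be to show $\Phi^{\mathcal{M}}|_A=\id_A$. First, exactly as in Proposition~\ref{Prop:rigid}: for $a\in A$ the element $\Phi^{\mathcal{M}}(a)$ commutes with $B$ (because $a$ does and $B$ lies in the multiplicative domain), hence so does $E^{\mathcal{M}}(\Phi^{\mathcal{M}}(a))$, and slicing by states on $A$ together with $Z(\mathcal{M}(B))=\mathbb{C}$ (valid since $B$ is simple) gives $E^{\mathcal{M}}(\Phi^{\mathcal{M}}(a))\in A$. Hence $\Psi:=(E^{\mathcal{M}}\circ\Phi^{\mathcal{M}})|_A\colon A\to A$ is a well-defined unital completely positive map, $\mathbb{F}_\infty$-equivariant because $\Phi^{\mathcal{M}}$ fixes every $u_s$ and $E$ is equivariant; Lemma~\ref{Lem:ucp} then forces $\Psi=\id_A$, i.e.\ $E^{\mathcal{M}}(\Phi^{\mathcal{M}}(a))=a$ for all $a\in A$. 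To upgrade this to $\Phi^{\mathcal{M}}(a)=a$ I would use the Kadison--Schwarz inequality twice: for $a\in A$,
\[a^\ast a=E^{\mathcal{M}}(\Phi^{\mathcal{M}}(a))^\ast E^{\mathcal{M}}(\Phi^{\mathcal{M}}(a))\le E^{\mathcal{M}}\bigl(\Phi^{\mathcal{M}}(a)^\ast\Phi^{\mathcal{M}}(a)\bigr)\le E^{\mathcal{M}}\bigl(\Phi^{\mathcal{M}}(a^\ast a)\bigr)=a^\ast a,\]
so both inequalities are equalities, and expanding $E^{\mathcal{M}}\bigl((\Phi^{\mathcal{M}}(a)-a)^\ast(\Phi^{\mathcal{M}}(a)-a)\bigr)$ — using that $E^{\mathcal{M}}$ is an $\mathcal{M}(A\otimes B)$-bimodule map fixing $A$ and that $E^{\mathcal{M}}(\Phi^{\mathcal{M}}(a))=a$ — shows this quantity vanishes; faithfulness of the canonical conditional expectation, hence of $E^{\mathcal{M}}$, then gives $\Phi^{\mathcal{M}}(a)=a$. (This Kadison--Schwarz step replaces the Powers averaging argument of Proposition~\ref{Prop:rigid}; when $B^\beta\neq0$ one may instead run that argument verbatim, using condition~(2), a nonzero $\beta$-fixed positive element of $B$, and Lemma~\ref{Lem:simple}.)

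Once $\Phi^{\mathcal{M}}|_A=\id_A$ is known, $A$ too lies in the multiplicative domain of $\Phi^{\mathcal{M}}$ (as $\Phi^{\mathcal{M}}(a^\ast a)=a^\ast a=\Phi^{\mathcal{M}}(a)^\ast\Phi^{\mathcal{M}}(a)$); combined with $C\subset$ multiplicative domain and multiplicativity of $\Phi^{\mathcal{M}}$ across that domain, this yields $\Phi(ac)=\Phi^{\mathcal{M}}(a)\Phi^{\mathcal{M}}(c)=ac$ for $a\in A$, $c\in C$, and since the linear span of $A\cdot C$ contains every $(a\otimes b)u_s$ and is therefore dense in $D$, we conclude $\Phi=\id_D$. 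The step I expect to be the main obstacle is precisely the promotion of $E^{\mathcal{M}}\circ\Phi^{\mathcal{M}}|_A=\id_A$ to $\Phi^{\mathcal{M}}|_A=\id_A$: a completely positive $\Phi$ is not invertible, so one cannot first establish surjectivity of $\Phi^{\mathcal{M}}|_A$ and appeal to Lemma~\ref{Lem:comm}, and must instead combine the rigidity of $\mathbb{C}\subset A$ for equivariant unital completely positive maps (Lemma~\ref{Lem:ucp}) with the Kadison--Schwarz trick while keeping careful track of the multiplier extensions $\Phi^{\mathcal{M}}$, $E^{\mathcal{M}}$ — in particular the faithfulness of $E^{\mathcal{M}}$ — which is where the non-unitality of $B$ enters.
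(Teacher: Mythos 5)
Your proposal is correct and follows essentially the same route as the paper: strict extension $\Phi^{\mathcal{M}}$, the multiplicative-domain/commutant computation showing $(E^{\mathcal{M}}\circ\Phi^{\mathcal{M}})(A)\subset A$, Lemma~\ref{Lem:ucp} to get $(E^{\mathcal{M}}\circ\Phi^{\mathcal{M}})|_A=\id_A$, and faithfulness of $E^{\mathcal{M}}$ to upgrade this to $\Phi^{\mathcal{M}}|_A=\id_A$ before concluding via density of the span of $A\cdot C$. The only (cosmetic) difference is in the upgrade step: the paper restricts to unitaries $u\in A^{\rm u}$ and uses the norm estimate $\|u+x\|^2\geq 1+\|E^{\mathcal{M}}(x^\ast x)\|$, whereas you use the equality case of Kadison--Schwarz for all of $A$ — both amount to showing $E^{\mathcal{M}}\bigl((\Phi^{\mathcal{M}}(a)-a)^\ast(\Phi^{\mathcal{M}}(a)-a)\bigr)=0$, and your version works equally well.
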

\begin{proof}
Let $\Phi \colon D \rightarrow D$
be a completely positive map
with $\Phi|_{C}=\id_C$.
Observe that $C$ contains
an approximate unit of $D$.
Hence by Corollary 5.7 of \cite{Lan}, the $\Phi$ has a strictly continuous extension
$\Phi^{\mathcal{M}} \colon \mathcal{M}(D) \rightarrow \mathcal{M}(D)$.
Since $\Phi|_B=\id_B$, standard arguments on multiplicative domains show that
\[(E^\mathcal{M} \circ \Phi^{\mathcal{M}})(A)\subset \mathcal{M}(A\otimes B) \cap B'=A.\] (For the proof of the last equality, see the proof of Proposition \ref{Prop:rigid}.) 
Since $\Phi^{\mathcal{M}}(u_s)=u_s$ for all $s\in \mathbb{F}_\infty$,
the unital completely positive map \[(E^\mathcal{M} \circ \Phi^{\mathcal{M}})|_A \colon A \rightarrow A\] is $\mathbb{F}_\infty$-equivariant.
Therefore Lemma \ref{Lem:ucp} implies $(E^\mathcal{M} \circ \Phi^{\mathcal{M}})|_A=\id_A$ .
Observe that for any $u\in A^{\rm u}$ and any $x\in \mathcal{M}(D)$
satisfying $E^{\mathcal{M}}(x)=0$, we have \[\|u+x\|^2\geq \|E^{\mathcal{M}}((u+x)^\ast (u+x))\|=1+ \|E^\mathcal{M}(x^\ast x)\|.\]
Since $E^\mathcal{M} \colon \mathcal{M}(D) \rightarrow \mathcal{M}(A\otimes B)$ is faithful, we obtain $\|u+x\|>\|u\|$ unless $x=0$.
As both $E^\mathcal{M}$ and $\Phi^{\mathcal{M}}$ are contractive,
the equality $(E^\mathcal{M} \circ \Phi^{\mathcal{M}})|_{A^{\rm u}}=\id_{A^{\rm u}}$ implies that $\Phi^{\mathcal{M}}|_{A^{\rm u}}=\id_{A^{\rm u}}$.
Since $A^{\rm u}\cdot C$ spans a dense subspace of 
$D$,
we conclude $\Phi=\id_D$.
\end{proof}
Now by combining Proposition \ref{Prop:upert2} and Theorems \ref{Thm:inter}, \ref{Thm:rigid}, we obtain the Main Theorem.

Before closing this section, we record the following elementary lemma on rigidity of \Cs-algebra inclusions. This lemma will be used in the next section.
\begin{Lem}\label{Lem:rigidcorner}
Let $A \subset B$ be a rigid inclusion of unital purely infinite simple \Cs-algebras.
Let $p\in A^{\rm p}$.
Then the inclusion $pAp \subset pBp$ is also rigid.
\end{Lem}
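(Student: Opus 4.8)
The plan is to lift a completely positive map $\Psi \colon pBp \to pBp$ with $\Psi|_{pAp} = \id_{pAp}$ to a completely positive map $\widetilde{\Psi} \colon B \to B$ fixing $A$ pointwise, and then invoke rigidity of $A \subset B$ to conclude $\widetilde{\Psi} = \id_B$, whence $\Psi = \id_{pBp}$. The mechanism for the lift uses that $B$ (and $A$) are unital purely infinite simple: by Cuntz's result (\cite{Cun}; cf.\ Lemma \ref{Lem:trans} with $x_1 = x_2 = 0$ applied suitably, or directly the comparison theory of projections), the projection $p \in A^{\mathrm p}$ is properly infinite in $A$, so there are isometries $s_1, s_2, \ldots \in A$ with $s_n s_n^\ast \le p$ pairwise orthogonal and, crucially, a sequence realising $[1]_0$; more to the point, since $1$ is properly infinite and $A$ is simple, $1$ is equivalent to a subprojection of $p$, i.e.\ there is an isometry $w \in A$ with $w^\ast w = 1$ and $ww^\ast \le p$. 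Then $\theta \colon B \to pBp$, $\theta(x) := wxw^\ast$, is an isomorphism of $B$ onto the hereditary subalgebra $ww^\ast B ww^\ast$, but I want a genuine corner, so instead I use $w$ to transport and compress.

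Concretely: first I would reduce to showing that any completely positive $\Psi$ on $pBp$ fixing $pAp$ equals $\id$. Given such $\Psi$, define $\widetilde{\Psi} \colon B \to B$ by
\[
\widetilde{\Psi}(x) := w^\ast\, \Psi(w x w^\ast)\, w, \qquad x \in B,
\]
where $w \in A$ is the isometry above with $w^\ast w = 1$, $ww^\ast \le p$. Since $ww^\ast \le p$ we have $wxw^\ast \in pBp$, so the formula makes sense; $\widetilde{\Psi}$ is completely positive and unital ($\widetilde{\Psi}(1) = w^\ast \Psi(ww^\ast) w$, and one must check $\Psi(ww^\ast) = ww^\ast$, which holds because $ww^\ast \in pAp$). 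For $a \in A$ we get $\widetilde{\Psi}(a) = w^\ast \Psi(waw^\ast) w = w^\ast (waw^\ast) w = a$, using $waw^\ast \in pAp$ and $\Psi|_{pAp} = \id$. Thus $\widetilde{\Psi}|_A = \id_A$, so rigidity of $A \subset B$ forces $\widetilde{\Psi} = \id_B$, i.e.\ $w^\ast \Psi(wxw^\ast) w = x$ for all $x \in B$, equivalently $\Psi(wxw^\ast) = wxw^\ast$ for all $x \in B$. This shows $\Psi$ is the identity on the hereditary subalgebra $ww^\ast B ww^\ast \subset pBp$.

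It remains to propagate $\Psi = \id$ from $ww^\ast B ww^\ast$ to all of $pBp$. Here I would use that $pAp \ni ww^\ast$ and that $pBp$ is purely infinite simple (hence generated, via its projections and the many inner automorphisms/partial isometries coming from $pAp$, by conjugates of the corner $ww^\ast B ww^\ast$): for any $y \in pBp$, since $p$ is properly infinite in $A$ one finds partial isometries $v \in pAp$ with $v^\ast v = p$ and $vv^\ast \le ww^\ast$ (again Cuntz), and then a multiplicative-domain argument shows $\Psi(v^\ast z v) = v^\ast \Psi(z) v$ for $z$ in the corner while $v^\ast(\cdot)v$ maps $pBp$ isomorphically onto $v^\ast(pBp)v \subset ww^\ast B ww^\ast$; running the previous paragraph's identity through this isomorphism gives $\Psi(y) = y$. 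The main obstacle is exactly this last propagation step: one must be careful that the partial isometries implementing the comparison $p \sim (\text{subprojection of } ww^\ast)$ lie in $A$ (not merely in $B$), so that they land in the multiplicative domain of $\Psi$ — but this is precisely guaranteed by $p$ being properly infinite in the simple algebra $A$ and by Lemma \ref{Lem:trans}/\cite{Cun}. Once the multiplicative-domain bookkeeping is set up, the argument closes, and since $\Phi|_A = \id_A$ in the original inclusion plays no further role, the lemma follows.
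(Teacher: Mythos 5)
Your proposal is correct and is essentially the paper's argument: both compress via an isometry $w\in A$ with $w^\ast w=1$, $ww^\ast\le p$ to transport $\Psi$ to a completely positive map on $B$ fixing $A$, and then invoke rigidity of $A\subset B$. The paper shortcuts your final ``propagation'' step by observing directly (via the multiplicative domain, since $wp\in pAp$) that $\widetilde{\Psi}|_{pBp}=\Psi$, so that $\widetilde{\Psi}=\id_B$ immediately gives $\Psi=\id_{pBp}$; your extra step with a partial isometry $v\in pAp$, $v^\ast v=p$, $vv^\ast\le ww^\ast$ is sound but not needed.
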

\begin{proof}
Assume that  the inclusion $pAp \subset pBp$ is not rigid.
Take a completely positive map
$\Phi \colon pBp \rightarrow pBp$
satisfying $\Phi|_{pAp}=\id_{pAp}$ and $\Phi\neq \id_{pBp}$.
Choose $v\in A$ satisfying $v^\ast v=1$, $vv^\ast \leq p$.
Define $\Psi\colon B \rightarrow B$
to be $\Psi(x):= v^\ast\Phi(v x v^\ast)v$, $x\in B$.
Then for any $x\in pBp$, since $vp \in pAp$,
we obtain
\[\Psi(x) = v^\ast\Phi(vpxpv^\ast )v=v^\ast vp\Phi(x)pv^\ast v=\Phi(x).\]
In particular, $\Psi\neq \id_B$.
Also, for any $a\in A$, as $v a v^\ast \in pAp$, we have
\[\Psi(a)= v^\ast \Phi(va v^\ast)v = v^\ast v a v^\ast v =a.\]
In summary, we obtain $\Psi|_A=\id_A$, $\Psi\neq \id_B$.
Thus the inclusion $A\subset B$ is not rigid.
\end{proof}
\section{Applications to Kirchberg algebras: proofs of Theorems \ref{Thmint:Main} to \ref{Thmint:3}}\label{Sec:Kir}
We now apply the Main Theorem to obtain the main results.
\begin{proof}[Proof of Theorem \ref{Thmint:Main}]
Let $\beta \colon \mathbb{F}_\infty \curvearrowright \mathcal{O}_\infty$
be an action obtained in Corollary \ref{Cor:ameO}.
We show that $B:=A \rca{\alpha} \mathbb{F}_\infty \subset C:=(A\otimes \mathcal{O}_\infty) \rca{\alpha\otimes \beta}\mathbb{F}_\infty$ gives the desired ambient \Cs-algebra.

We first show that $C$ is a Kirchberg algebra.
Clearly $C$ is separable.
Since $A \otimes \mathcal{O}_\infty$ is simple, purely infinite,
and $\alpha\otimes \beta$ is outer (because of its amenability
and the fact that $\mathbb{F}_\infty$ has no non-trivial amenable normal subgroup),
it follows from Kishimoto's theorem \cite{Kis} that $C$ is purely infinite and simple (see e.g.~Lemma 6.3 of \cite{Suz19b} for details).
Since $A\otimes \mathcal{O}_\infty$ is nuclear, so is $C$ by the amenability of $\alpha \otimes \beta$. Thus $C$ is a Kirchberg algebra.

By Theorem \ref{Thm:inter}, the inclusion indeed has no intermediate \Cs-algebras.
By Theorem \ref{Thm:rigid}, the inclusion is rigid.
Since the inclusion $\mathbb{C}\subset \mathcal{O}_\infty$ is a KK-equivalence \cite{Cun}, \cite{PV80},
so is $A \subset A \otimes \mathcal{O}_\infty$.
Now it follows from Theorem 16 of \cite{Pim} (see also \cite{PV})
that the inclusion $B\subset C$ is a KK-equivalence.
(Proof: We apply the exact sequences in Theorem 16 of \cite{Pim}
to a fixed free action of $\mathbb{F}_\infty$ on a countable tree.
Observe that for any countable set $I$,
the inclusion $\bigoplus_I A \subset  \bigoplus_I (A \otimes \mathcal{O}_\infty)$ is
a KK-equivalence.
By the Five Lemma and naturality of the exact sequences, the inclusion map $\iota \colon B \rightarrow C$ induces group isomorphisms
\[\varphi\colon \mathrm{KK}(C, B) \rightarrow \mathrm{KK}(B, B),\qquad
\psi \colon  \mathrm{KK}(C, B) \rightarrow \mathrm{KK}(C, C).\]
Put $x:= \varphi^{-1}(1_B)$, $y:=\psi^{-1}(1_C)$.
It then follows from the definition that
\[[\iota]\hat{\otimes}_{C} x =\varphi(x)=1_B,\qquad y\hat{\otimes}_{B}[\iota]= \psi(y)=1_C.\]
Thus $x=y$ and $\iota$ is a KK-equivalence.)
\end{proof} 
\begin{Rem}\label{Rem:gengr}
Recall that any discrete exact group $\Gamma$ admits an amenable
action on a unital purely infinite simple nuclear \Cs-algebra of density character $\sharp \Gamma$; see the proof of Proposition B in \cite{Suzeq}.
For the existence of a nuclear minimal ambient \Cs-algebra, our construction works for groups
of the form $\mathbb{F}_\Lambda \ast \Lambda$ for any infinite group $\Lambda$ with the approximation property \cite{HK}.
(However the resulting ambient algebras would be mysterious, cf.~ \cite{Suz19b}).
In particular, the reduced group \Cs-algebras of uncountable free groups
admit a nuclear minimal ambient \Cs-algebra.
\end{Rem}

\begin{proof}[Proof of Theorem \ref{Thmint:Kir}] 
Let $A$ be a Kirchberg algebra.
We have constructed, in the proof of the Proposition in \cite{Suzfp} (see also the proof of Theorem 5.1 in \cite{Suz19}), an action $\alpha \colon \mathbb{F}_\infty \curvearrowright C$
on a unital Kirchberg algebra $C$ in the bootstrap class whose reduced crossed product $C \rca{\alpha}\mathbb{F}_\infty$ is non-nuclear, purely infinite simple, and KK-equivalent to $\mathcal{O}_\infty$.
Let $\beta \colon \mathbb{F}_\infty \curvearrowright \mathcal{O}_\infty$
be an amenable action obtained in Corollary \ref{Cor:ameO}.
As shown in the proofs of the Proposition in \cite{Suzfp} and Theorem 5.1 in \cite{Suz19} (by using \cite{Kir}, \cite{Phi}),
the crossed product $(C \otimes \mathcal{O}_\infty) \rca{\alpha\otimes \beta}\mathbb{F}_\infty$
is stably isomorphic to $\mathcal{O}_\infty$.
Fix a projection \[p \in C \rca{\alpha}\mathbb{F}_\infty\] which generates
K$_0((C \otimes \mathcal{O}_\infty) \rca{\alpha\otimes \beta}\mathbb{F}_\infty) \cong \mathbb{Z}$.
(This is possible by \cite{PV} and \cite{Cun}.)
Denote by $1$ the trivial action of $\mathbb{F}_\infty$ on $A$.
Then by the Kirchberg $\mathcal{O}_\infty$-absorption theorem \cite{KP},
the corner $p[(A\otimes C \otimes \mathcal{O}_\infty) \rca{1 \otimes \alpha\otimes \beta}\mathbb{F}_\infty]p$ is isomorphic to $A$.
The desired subalgebra of $A \cong p[(A\otimes C \otimes \mathcal{O}_\infty) \rca{1 \otimes \alpha\otimes \beta}\mathbb{F}_\infty]p$ is given by
\[p[(A\otimes C) \rca{1 \otimes \alpha}\mathbb{F}_\infty]p \subset p[(A\otimes C \otimes \mathcal{O}_\infty) \rca{1 \otimes \alpha\otimes \beta}\mathbb{F}_\infty]p.\]
Indeed, by Theorem \ref{Thm:inter} and \cite{Suz19}, Lemma 5.2,
the inclusion has no intermediate \Cs-algebras.
By Theorem \ref{Thm:rigid} and Lemma \ref{Lem:rigidcorner}, the inclusion is rigid.
Note that the corner $p[(A\otimes C) \rca{1 \otimes \alpha}\mathbb{F}_\infty]p$
is isomorphic to $A\otimes p(C\rca{\alpha}\mathbb{F}_\infty)p$,
which is not nuclear by the choice of $\alpha$.
By \cite{Pim} or \cite{PV} (see the proof of Theorem \ref{Thmint:Main} for details),
the inclusion gives a KK-equivalence.

We next construct an ambient non-exact \Cs-algebra of
$p[(A \otimes C \otimes \mathcal{O}_\infty) \rca{1\otimes \alpha\otimes \beta}\mathbb{F}_\infty]p \cong A$
as in the statement.
We first take a non-exact unital simple separable \Cs-algebra $D_0$ such that
the inclusion $\mathbb{C}\subset D_0$ is a KK-equivalence.
(Example: Take a unital non-exact separable \Cs-algebra $P_0$.
Set $P:=\{f\in C([0, 1]^2, P_0):f(t, 0)\in \mathbb{C} {\rm~for~all~}t\in [0, 1]\}$.
Note that $P$ is non-exact and homotopy equivalent to $\mathbb{C}$.
Take a faithful state $\varphi$ on $P$ satisfying the conditions in Theorem 2 of \cite{Dy}.
By Exercise 4.8.1 in \cite{BO}, there is a Hilbert $P$-bimodule
 whose Toeplitz--Pimsner algebra $D_0$ \cite{Pim}
is isomorphic to the reduced free product $(P, \varphi) \ast (\mathcal{T}, \omega)$.
Here $\mathcal{T}$ is the Toeplitz algebra and $\omega$ is a non-degenerate state on $\mathcal{T}$.
By Theorem 4.4 of \cite{Pim2}, the inclusion $P \subset D_0$ is a KK-equivalence.
By Theorem 2 of \cite{Dy}, $D_0$ is simple.
Thus $D_0$ gives the desired \Cs-algebra.)
Set $D:= D_0 \otimes \mathcal{O}_\infty$.
Then $D$ is separable, purely infinite, simple, and the inclusion
$\mathbb{C} \subset D$ gives a KK-equivalence.
By applying Proposition \ref{Prop:upert2} to the trivial action $\mathbb{F}_\infty \curvearrowright D$,
we obtain an (inner) action $\gamma \colon \mathbb{F}_\infty \curvearrowright D$
satisfying conditions (1), (2) in Proposition \ref{Prop:upert2}.
By the same reasons as in the previous paragraph,
the inclusion
\[p[(A\otimes C \otimes \mathcal{O}_\infty) \rca{1\otimes \alpha\otimes \beta}\mathbb{F}_\infty]p \subset p[(A\otimes C \otimes \mathcal{O}_\infty \otimes D) \rca{1\otimes \alpha\otimes \beta\otimes \gamma}\mathbb{F}_\infty]p\]
is rigid, gives a KK-equivalence, and has no intermediate \Cs-algebras.
The non-exactness of the largest \Cs-algebra is obvious.
Finally, by Kirchberg's theorem (\cite{Ror}, Theorem 4.1.10 (i)), all these \Cs-algebras are purely infinite.
\end{proof}
\begin{Rem}
By a similar method to the Proposition in \cite{Suzfp} (by using \cite{Oz}), one can
arrange the smallest algebra in Theorem \ref{Thmint:Kir}
not having the completely bounded approximation property (see Section 12.3 of \cite{BO} for the definition).
\end{Rem}
\begin{proof}[Proof of Theorem \ref{Thmint:3}]

Recall that in the proof of \cite{Suz19}, Theorem 5.1,
we obtained an amenable action $\alpha \colon \mathbb{F}_\infty \curvearrowright D$ on a unital Kirchberg algebra and a projection $p\in D$ 
such that $p(D \rca{\alpha} \mathbb{F}_\infty)p$ is isomorphic to $\mathcal{O}_\infty$.
Denote by $1\colon \mathbb{F}_\infty \curvearrowright A$ the trivial action on $A$.
By the Kirchberg $\mathcal{O}_\infty$-absorption theorem \cite{KP},
$p((A\otimes D) \rca{1\otimes \alpha} \mathbb{F}_\infty)p \cong A$.

Let $B$ be a given unital separable \Cs-algebra.
Choose a faithful state $\varphi$ on $B$.
Let $\psi$ denote the state on $C([0, 1])$ defined by
the Riemann integral.
Then by Theorem 2 of \cite{Dy},
the reduced free product
\[P_0:=(B, \varphi) \ast (C[0, 1], \psi)\]
 is simple.
Note that by Theorem 4.8.5 of \cite{BO},
the canonical inclusion $B \subset P_0$ admits a faithful conditional expectation (as $\psi$ is faithful).
Set
\[P:=P_0 \otimes \mathcal{O}_\infty.\]
Then $P$ is unital, simple, separable, and purely infinite.
The canonical inclusion $B\subset P$ still admits a faithful conditional expectation.
Applying Proposition \ref{Prop:upert2}
to the trivial action of $\mathbb{F}_\infty$ on $P$,
we obtain an (inner) action $\beta\colon \mathbb{F}_\infty \curvearrowright P$
satisfying conditions (1) and (2) in the statement.
Now define \[C:= p[(A \otimes D \otimes P)\rca{1\otimes \alpha \otimes \beta} \mathbb{F}_\infty]p.\]
Observe that the map $x\in P \mapsto xp \in C$ defines a \Cs-algebra embedding.
We identify $B\subset P$ with \Cs-subalgebras of $C$ via this embedding.

We now show that $B\subset C$ admits a faithful conditional expectation.
Since $p \in D$, the canonical conditional expectation 
\[E\colon (A \otimes D \otimes P)\rca{1\otimes \alpha \otimes \beta} \mathbb{F}_\infty \rightarrow A\otimes D \otimes P\]
restricts to the faithful conditional expectation $\Phi \colon C \rightarrow p(A\otimes D\otimes P)p$.
Any faithful state $\omega$ on $p(A\otimes D)p$
induces a faithful conditional expectation
$\Psi \colon p(A \otimes D\otimes P)p \rightarrow P$
by the formula $\Psi(p(x \otimes y)p)=\omega(pxp)y$; $x\in A\otimes D$, $y\in P$.
The composite $\Psi\circ \Phi  \colon C \rightarrow P$
gives a faithful conditional expectation.
Since $B\subset P$ has a faithful conditional expectation,
consequently so does $B \subset C$.

It follows from Theorem \ref{Thm:inter} and \cite{Suz19}, Lemma 5.2 that
the inclusion
\[A \cong p((A\otimes D) \rca{1\otimes \alpha} \mathbb{F}_\infty)p \subset C\] has no intermediate \Cs-algebras.
By Theorem \ref{Thm:rigid} and Lemma \ref{Lem:rigidcorner},
the inclusion $A\subset C$ is rigid.
\end{proof}
\begin{Rem}\label{Rem:LF}
By a similar method to the proof of Theorem \ref{Thmint:3} (using \cite{Dy94} instead of \cite{Dy}),
one can confirm the following property for the free group factor $L(\mathbb{F}_\infty)$:
Any von Neumann algebra $M$ with separable predual
embeds into a factor $N$ with a normal faithful conditional expectation
which contains $L(\mathbb{F}_\infty)$ as a rigid maximal von Neumann subalgebra.
Here we say that a von Neumann subalgebra $M\subset N$ is \emph{rigid}
if $\id_M$ is the only normal completely positive map $\Phi \colon N \rightarrow N$
satisfying $\varphi|_M=\id_M$ 
\end{Rem}

\appendix
\section{Tensor splitting theorem for non-unital simple \Cs-algebras}
Here we record a few necessary and useful technical lemmas on non-unital \Cs-algebras.
Although these results would be known for some experts, we do not know an appropriate reference.
As a result of these lemmas, we obtain the tensor splitting theorem (cf.~\cite{GK}, \cite{Zac}, \cite{Zsi})
for non-unital simple \Cs-algebras.

An element of a \Cs-algebra $A$ is said to be {\it full}
if it generates $A$ as a closed ideal of $A$.

\begin{Lem}\label{Lem:full}
Let $A$ be a \Cs-algebra. Let $a$ be a full positive element of $A$.
Then for any finite subset $F$ of $A$ and
any $\epsilon>0$,
there is a sequence $x_1, \ldots, x_n\in A$ satisfying
\[\|\sum_{i=1}^n x_i a x_i^\ast\|\leq 1 \qquad {\rm~and~} \qquad \| \sum_{i=1}^n x_i a x_i^\ast b-b\|<\epsilon \quad {\rm ~for~} b\in F.\]
\end{Lem}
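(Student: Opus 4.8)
The plan is to reduce the statement to the norm-density assertion $\overline{P}=A_{+}$, where $P$ denotes the set of all finite sums $\sum_{i=1}^{n}x_{i}ax_{i}^{\ast}$ with $x_{i}\in A$; note that $P$ is a cone in $A_{+}$ which is stable under $x\mapsto yxy^{\ast}$ for every $y\in A$. Granting this, the lemma follows by a routine approximate-unit argument. I would fix an approximate unit $(h_{\lambda})$ of $A$ contained in the unit ball of $A_{+}$ and pick $\lambda$ with $\|h_{\lambda}b-b\|<\epsilon/2$ for all $b\in F$ (possible since $F$ is finite). Since $h_{\lambda}\in A_{+}=\overline{P}$, I can choose $h_{0}=\sum_{i}x_{i}ax_{i}^{\ast}\in P$ with $2\|h_{0}-h_{\lambda}\|\cdot\max_{b\in F}\|b\|<\epsilon/2$, and then set $h:=\mu^{2}h_{0}=\sum_{i}(\mu x_{i})a(\mu x_{i})^{\ast}$ with $\mu:=\min(1,\|h_{0}\|^{-1/2})$; thus $h\in P$, $\|h\|\le1$, $\|h-h_{\lambda}\|\le2\|h_{0}-h_{\lambda}\|$, and hence $\|hb-b\|\le\|h-h_{\lambda}\|\,\|b\|+\|h_{\lambda}b-b\|<\epsilon$ for every $b\in F$, as required.

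To establish $\overline{P}=A_{+}$ I would proceed in three steps. First, $\overline{\operatorname{span}}\{xax^{\ast}:x\in A\}=A$: by the polarization identity
\[4\,xay^{\ast}=\sum_{k=0}^{3}i^{k}(x+i^{k}y)a(x+i^{k}y)^{\ast}\]
this closed span coincides with $\overline{\operatorname{span}}\{xay^{\ast}:x,y\in A\}=\overline{AaA}$, which equals $A$ because $a$ is full. Second, $\overline{P}$ has the hereditary property that $0\le c\le d\in\overline{P}$ forces $c\in\overline{P}$: for $\varepsilon>0$ put $r_{\varepsilon}:=c^{1/2}(d+\varepsilon)^{-1/2}\in A$; the standard comparison estimate for $c\le d$ gives $\|r_{\varepsilon}\|\le1$ together with $r_{\varepsilon}dr_{\varepsilon}^{\ast}\to c$ in norm, and since $r_{\varepsilon}dr_{\varepsilon}^{\ast}\in\overline{P}$ (by stability of $\overline{P}$ under $x\mapsto yxy^{\ast}$) and $\overline{P}$ is closed, it follows that $c\in\overline{P}$. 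Third, for an arbitrary $b\in A_{+}$ we have $b^{1/2}\in A=\overline{AaA}$, so $b^{1/2}=\lim_{n}\xi_{n}$ for finite sums $\xi_{n}=\sum_{j}y_{j}^{(n)}az_{j}^{(n)}$, whence $b=\lim_{n}\xi_{n}\xi_{n}^{\ast}$; a routine estimate in a matrix algebra over $A$ produces a scalar $M_{n}\ge0$ with $\xi_{n}\xi_{n}^{\ast}\le M_{n}\sum_{j}y_{j}^{(n)}a^{2}(y_{j}^{(n)})^{\ast}=\sum_{j}(M_{n}^{1/2}y_{j}^{(n)}a^{1/2})\,a\,(M_{n}^{1/2}y_{j}^{(n)}a^{1/2})^{\ast}\in P$, so $\xi_{n}\xi_{n}^{\ast}$ is dominated by an element of $P$ and therefore lies in $\overline{P}$ by the second step; letting $n\to\infty$ yields $b\in\overline{P}$. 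Hence $\overline{P}=A_{+}$.

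The one genuine point is the second step --- the hereditary property of $\overline{P}$ --- which rests on the comparison estimate $c\le d\ \Rightarrow\ \big\|c-c^{1/2}(d+\varepsilon)^{-1/2}d(d+\varepsilon)^{-1/2}c^{1/2}\big\|\to0$ as $\varepsilon\to0$; everything else is bookkeeping. The only mildly delicate piece of that bookkeeping is that approximating $h_{\lambda}$ by an element of $P$ gives only the a priori bound $\big\|\sum_{i}x_{i}ax_{i}^{\ast}\big\|\le1+\epsilon$, which is precisely why one rescales $x_{i}\mapsto\mu x_{i}$ to sharpen it to the exact bound $\le1$ demanded by the statement.
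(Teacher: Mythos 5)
Your argument is correct, but it takes a genuinely different route from the paper's. The paper first reduces to a single positive $b\in F$ (using $\|yb-b\|\le\|yc-c\|$ with $c=\bigl(\sum_{d\in F}dd^\ast\bigr)^{1/2}$), then uses fullness to replace $b$ by an exact sum $\sum_i y_iaz_i$, dominates $b^2\le C\sum_i z_i^\ast az_i=:Cw^2$, and takes $x_i=f_N(w)z_i^\ast$ for functions with $tf_k(t)\le 1$ and $tf_k(t)\to1$; the norm bound $\|\sum_i x_iax_i^\ast\|=\|f_N(w)^2w^2\|\le1$ and the estimate $\|(f_N(w)^2w^2-1)b\|\le\sqrt{C}\,\|(f_N(w)^2w^2-1)w\|$ then finish the proof in one stroke. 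You instead isolate and prove the stronger, reusable statement that the cone $P=\{\sum_i x_iax_i^\ast\}$ is norm-dense in $A_+$ --- via polarization, the hereditariness of $\overline{P}$ (the $c^{1/2}(d+\varepsilon)^{-1/2}$ trick), and the matrix domination $\xi\xi^\ast\le M\sum_j y_ja^2y_j^\ast$ --- and then recover the lemma by approximating an approximate unit and rescaling to force $\|h\|\le1$. The two proofs share the same engine (fullness to write elements as $\sum yaz$, a positivity estimate in a matrix algebra, and a functional-calculus ``division''), but your architecture buys the clean intermediate fact $\overline{P}=A_+$, which the paper never states, at the cost of the extra polarization and approximate-unit bookkeeping; the paper's direct construction is shorter and produces the quasi-unit acting on $b$ without passing through norm approximation of an approximate unit. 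All the steps you flag as delicate (the comparison estimate, the rescaling $\mu=\min(1,\|h_0\|^{-1/2})$) check out, so there is no gap.
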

\begin{proof}
Observe that for any sequence $x_1, \ldots, x_n\in A$ and any $b\in F$, the \Cs-norm condition implies
\[\| \sum_{i=1}^n x_i a x_i^\ast b-b\| \leq \| \sum_{i=1}^n x_i a x_i^\ast c-c\|,\]
where $c:= \left(\sum_{d\in F}d d^\ast \right)^{1/2}$.
Therefore we only need to show the statement when $F$ is a singleton in $A_+$.
By the fullness of $a$, we may further assume that the element $b$ in
$F$ is of the form $\sum_{i=1}^n y_i a z_i$; $y_1, \ldots, y_n, z_1, \ldots, z_n \in A$.
In this case, we have
\[b^2 = b^\ast b =\sum_{i, j=1}^n z_i^\ast a y_i^\ast y_j a z_j
\leq C \sum_{i=1}^n z_i^\ast a z_i,\]
where $C:= \| a\| \|(y_i^\ast y_j)_{1\leq i, j \leq n}\|_{\mathbb{M}_n(A)}$.
Put $w:= \left(\sum_{i=1}^n z_i^\ast a z_i \right)^{1/2}$.
Choose a sequence $(f_k)_{k=1}^\infty$ in $C_0(]0, \infty[)_+$
satisfying $t f_k(t)\leq 1$ for all $k\in \mathbb{N}$ and all $t\in [0, \infty[$, and
$\lim_{k \rightarrow \infty} tf_k(t)=1$ uniformly on compact subsets of $]0, \infty[$.
Then, for each $k\in \mathbb{N}$, we have
\[ \sum_{i=1}^n f_k(w)z_i^\ast a z_i f_k(w)
= f_k(w)^2 w^2\leq 1,\]
\[\| \sum_{i=1}^n f_k(w) z_i^\ast a z_i f_k(w) b-b\| \leq \sqrt{C} \| ( w^2 f_k(w)^2-1)w\|.\]
The last term tends to zero as $k\rightarrow \infty$.
Therefore, for a sufficiently large $N\in \mathbb{N}$,
the sequence $(f_N(w)z_i^\ast)_{i=1}^n$ satisfies the required conditions.
\end{proof}
For simple \Cs-algebras, one can strengthen Lemma \ref{Lem:full} as follows.
\begin{Lem}\label{Lem:simple}
Let $A$ be a simple \Cs-algebra. Let $a\in A_+ \setminus \{0\}$.
Then for any $b\in A_+$ and
any $\epsilon>0$,
there is a sequence $x_1, \ldots, x_n\in A$ satisfying
\[\|\sum_{i=1}^n x_i x_i^\ast\| \leq \|a\|^{-1}\|b\|,\qquad \sum_{i=1}^n x_i a x_i^\ast \approx_\epsilon b.\]
\end{Lem}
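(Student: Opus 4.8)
The plan is to reduce everything to Lemma~\ref{Lem:full}, the extra inputs being a spectral compression of $a$ to the top of its spectrum together with a $b^{1/2}$-sandwich that produces the norm control for free. I may assume $b\neq 0$ (the case $b=0$ being trivial) and, replacing $a$ by $a/\|a\|$ and rescaling the $x_i$ by $\|a\|^{-1/2}$ at the very end, that $\|a\|=1$; in particular $1\in\mathrm{sp}(a)$.

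First I would fix a small $\mu\in(0,1)$ with $\mu\|b\|<\epsilon/2$ and take $f\in C_0((0,1])$ with $f\equiv 0$ on $[0,1-\mu]$, affine on $[1-\mu,1]$, and $f(1)=1$. Set $c:=f(a)^2\in A_+$. Since $f(1)=1$ and $1\in\mathrm{sp}(a)$ we have $\|c\|=1$, so $c\neq 0$ and hence $c$ is full by simplicity of $A$. The point of this choice is that $r:=f(a)^2a-c=f(a)^2(a-1)$ satisfies $-\mu c\le r\le 0$: indeed $a\le 1$ everywhere, while on the support of $f$ one has $a\ge 1-\mu$, and off it $f(a)=0$. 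Thus the error $r$ is negative and is dominated by $c$ itself, not merely by $\mu$.

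Next I would apply Lemma~\ref{Lem:full} to the full positive element $c$, with $F=\{b^{1/2}\}$ and a tolerance $\delta_0>0$ chosen so that $\|b\|^{1/2}\delta_0<\epsilon/2$, obtaining $w_1,\dots,w_n\in A$ with $\|\sum_i w_icw_i^{\ast}\|\le 1$ and $\|(\sum_i w_icw_i^{\ast})b^{1/2}-b^{1/2}\|<\delta_0$. Then I set $z_i:=b^{1/2}w_i$ and $x_i:=z_if(a)$. Because $\sum_i w_icw_i^{\ast}\le 1$, sandwiching by $b^{1/2}$ gives
\[\sum_i x_ix_i^{\ast}=\sum_i z_icz_i^{\ast}=b^{1/2}\Big(\sum_i w_icw_i^{\ast}\Big)b^{1/2}\le b,\]
so $\|\sum_i x_ix_i^{\ast}\|\le\|b\|=\|a\|^{-1}\|b\|$, which is exactly the required bound and comes at no cost. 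For the approximation, using commutativity inside $C^{\ast}(a)$ I would write $\sum_i x_iax_i^{\ast}=\sum_i z_i\big(f(a)^2a\big)z_i^{\ast}=\sum_i z_icz_i^{\ast}+\sum_i z_irz_i^{\ast}$. The first summand equals $b^{1/2}(\sum_i w_icw_i^{\ast})b^{1/2}=b^{1/2}\big[(\sum_i w_icw_i^{\ast})b^{1/2}-b^{1/2}\big]+b$, hence is within $\|b^{1/2}\|\delta_0=\|b\|^{1/2}\delta_0<\epsilon/2$ of $b$; and $-\mu c\le r\le 0$ gives $-\mu\sum_i z_icz_i^{\ast}\le\sum_i z_irz_i^{\ast}\le 0$, so $\|\sum_i z_irz_i^{\ast}\|\le\mu\|\sum_i z_icz_i^{\ast}\|\le\mu\|b\|<\epsilon/2$. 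Adding the two estimates yields $\sum_i x_iax_i^{\ast}\approx_\epsilon b$, and undoing the normalization of $a$ finishes the proof.

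The one delicate point is precisely that Lemma~\ref{Lem:full} controls $\|\sum x_iax_i^{\ast}\|$ but says nothing about $\|\sum x_ix_i^{\ast}\|$, whereas here the target $\|a\|^{-1}\|b\|$ is essentially forced (from $\sum x_iax_i^{\ast}\le\|a\|\sum x_ix_i^{\ast}$ one gets $\|\sum x_ix_i^{\ast}\|\ge\|a\|^{-1}\|\sum x_iax_i^{\ast}\|$), so there is no slack. The resolution is the two moves above working together: passing to the spectral compression $c=f(a)^2$ makes $f(a)af(a)$ agree with $c$ up to an error controlled by $c$, and feeding $c$ into Lemma~\ref{Lem:full} through a $b^{1/2}$-sandwich upgrades the crude inequality $\|\sum w_icw_i^{\ast}\|\le 1$ to the operator inequality $\sum z_icz_i^{\ast}\le b$, which simultaneously delivers the norm estimate and, since $b^{1/2}$ is then an approximate unit for $b$, the norm approximation.
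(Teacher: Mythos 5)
Your proposal is correct and follows essentially the same route as the paper: normalize $a$, compress $a$ to the top of its spectrum via $c=f(a)^2$ so that $f(a)af(a)$ agrees with $c$ up to an error dominated by $\mu c$, feed $c$ into Lemma~\ref{Lem:full} with $F=\{b^{1/2}\}$, and set $x_i=b^{1/2}w_if(a)$ so that the operator inequality $\sum_i x_ix_i^\ast\le b$ delivers the norm bound. The only differences from the paper's argument are cosmetic (the paper normalizes $b$ as well and phrases the spectral estimate as $(1-\epsilon/2)f(a)^2\le f(a)af(a)\le f(a)^2$ rather than via the remainder $r$).
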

\begin{proof}
We may assume $\|a\|=\|b\|=1$.
Take $f \in C([0, 1])_+$
satisfying $f(1)\neq 0$ and $\supp(f)\subset [1- \epsilon/2, 1]$.
Then
\[f(a)\neq 0,\qquad \left(1- \frac{\epsilon}{2}\right) f(a)^2 \leq f(a) a f(a) \leq f(a)^2.\]
Applying Lemma \ref{Lem:full} to $f(a)^2$ and $F=\{b^{1/2}\}$, we obtain a sequence
$y_1, \ldots, y_n \in A$ satisfying
\[\|\sum_{i=1}^n y_i f(a)^2 y_i^\ast \|\leq 1,\qquad \|\sum_{i=1}^n y_i f(a)^2 y_i^\ast b^{\frac{1}{2}}-b^{\frac{1}{2}}\|< \frac{\epsilon}{2}.\]
Set $x_i:=b^{1/2}y_i f(a) $ for $i=1, \ldots, n$.
Then
\[\sum_{i=1}^n x_ix_i^\ast  = \sum_{i=1}^n b^{\frac{1}{2}} y_i f(a)^2 y_i^\ast b^{\frac{1}{2}} \leq 1.\]
Straightforward estimations show that
\[\sum_{i=1}^n x_i a x_i^\ast \approx_{ \epsilon/2} \sum_{i=1}^n b^{\frac{1}{2}} y_i f(a)^2 y_i^\ast b^{\frac{1}{2}} \approx_{ \epsilon/2} b.\]
Therefore $x_1, \ldots, x_n$ form the desired sequence.
\end{proof}

As an application of Lemma \ref{Lem:simple}, one can remove the
unital condition from the tensor splitting theorem \cite{Zac}, \cite{Zsi} (cf.~\cite{GK}).
For a \Cs-subalgebra $C$ of $A\otimes B$,
we define the subset $\mathcal{S}_A(C)$ of $B$ to be
\[\mathcal{S}_A(C):=\left\{(\varphi \otimes \id_B)(c): \varphi \in A^\ast, c\in C\right\}.\]
\begin{Thm}\label{Thm:TS}
Let $A$ be a simple \Cs-algebra and $B$ be a \Cs-algebra.
Let $C$ be a \Cs-subalgebra of $A\otimes B$ closed under multiplications by $A$.
Then $\mathcal{S}_A(C)$ forms a \Cs-subalgebra of $C$ and
satisfies
$A \otimes \mathcal{S}_A(C) \subset C$.
Thus, when $A$ satisfies the strong operator approximation property \cite{HK}
or when the inclusion $\mathcal{S}_A(C) \subset B$ admits a completely bounded projection,
we have
$C= A\otimes \mathcal{S}_A(C)$.
\end{Thm}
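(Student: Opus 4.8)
The plan is to isolate one \emph{slicing lemma} as the crux: for every $c\in C$, every state $\varphi$ on $A$, and every $a\in A$, the element $a\otimes(\varphi\otimes\id)(c)$ lies in $C$. Everything else is routine bookkeeping. Since every functional in $A^\ast$ is a linear combination of states, the slicing lemma at once yields $a\otimes d\in C$ for all $a\in A$ and all $d\in\mathcal S_A(C)$, i.e.\ $A\otimes_{\mathrm{alg}}\mathcal S_A(C)\subset C$. From this one reads off that $\mathcal S_A(C)$ is a \Cs-subalgebra of $B$: it is a self-adjoint linear subspace by construction; it is norm-closed because if $d_n\to d$ with $d_n\in\mathcal S_A(C)$ then $a\otimes d_n\in C$ for all $a\in A$, so $a\otimes d\in\overline C=C$, and then $d=(\psi\otimes\id)(a\otimes d)\in\mathcal S_A(C)$ on fixing $a\in A$, $\psi\in A^\ast$ with $\psi(a)=1$; and it is multiplicatively closed since $(a_1a_2)\otimes(d_1d_2)=(a_1\otimes d_1)(a_2\otimes d_2)\in C$ and, choosing $a_1,a_2$ with $a_1a_2\neq 0$ and slicing by a $\psi$ with $\psi(a_1a_2)=1$, one gets $d_1d_2\in\mathcal S_A(C)$. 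Taking norm closures then gives $A\otimes\mathcal S_A(C)=\overline{A\otimes_{\mathrm{alg}}\mathcal S_A(C)}\subset C$, which is the first assertion of Theorem \ref{Thm:TS}.

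To prove the slicing lemma, fix $c\in C$, a state $\varphi$ on $A$, and set $d:=(\varphi\otimes\id)(c)$; since $\{a\in A:a\otimes d\in C\}$ is a norm-closed linear subspace of $A$, it suffices to treat $a\in A_+$. Let $\epsilon>0$. First approximate $c$ by a finite sum $c_0=\sum_{k=1}^m a_k\otimes b_k$ with $\|c-c_0\|$ small; this fixes finitely many $a_k\in A$ and the numbers $\|b_k\|$. Now excise $\varphi$ on $\{a_1,\dots,a_m\}$ via the Akemann--Anderson--Pedersen theorem (\cite{AAP}; see also \cite{BO}, Theorem~1.4.10), with tolerance chosen in terms of $\epsilon$, $m$ and the $\|b_k\|$: this produces $h\in A_+$ with $\|h\|=1$ and $h a_k h\approx\varphi(a_k)h^2$ for each $k$. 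Then $(h\otimes 1)c(h\otimes 1)\in C$ — here we use that $C$ is closed under multiplications by $A$ — lies within $\epsilon$ of $h^2\otimes d$. Finally invoke Lemma \ref{Lem:simple}, applied to the simple \Cs-algebra $A$, the nonzero positive element $h^2$, and the target $a$: it yields $x_1,\dots,x_n\in A$ with $\|\sum_i x_ix_i^\ast\|\le\|a\|$ and $\sum_i x_ih^2x_i^\ast\approx a$. Conjugating the element of $C$ which is $\epsilon$-close to $h^2\otimes d$ by the multipliers $x_i\otimes 1$ keeps us inside $C$, and since $\|\sum_i x_izx_i^\ast\|\le\|\sum_i x_ix_i^\ast\|\,\|z\|$ the result is within $\|a\|\epsilon$ of $(\sum_i x_ih^2x_i^\ast)\otimes d$, which in turn is arbitrarily close to $a\otimes d$. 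Letting $\epsilon\to 0$ and using that $C$ is closed gives $a\otimes d\in C$. The delicate point — and the main obstacle — is the order of quantifiers here: the finite approximation of $c$ must be chosen before the excision, so that the excision problem stays of fixed finite size; and the spreading step would be useless without the \emph{uniform} norm bound $\|\sum_i x_ix_i^\ast\|\le\|a\|\,\|h^2\|^{-1}$ supplied by Lemma \ref{Lem:simple}, since otherwise conjugation by the $x_i\otimes 1$ would amplify the distance to $C$ by an uncontrolled factor. This is exactly why simplicity of $A$ (through Lemma \ref{Lem:simple}), rather than a bare fullness argument, is the right input.

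For the last assertion, we have already established $A\otimes\mathcal S_A(C)\subset C\subset A\otimes B$, so only $C\subset A\otimes\mathcal S_A(C)$ remains. By definition every $c\in C$ has all of its left slices $(\varphi\otimes\id)(c)$, $\varphi\in A^\ast$, in $\mathcal S_A(C)$; that is, $c$ lies in the Fubini product of $A$ and $\mathcal S_A(C)$ inside $A\otimes B$. If $A$ has the strong operator approximation property \cite{HK}, this Fubini product equals $A\otimes\mathcal S_A(C)$, so $c\in A\otimes\mathcal S_A(C)$. Alternatively, a completely bounded projection $P\colon B\to\mathcal S_A(C)$ induces a completely bounded projection $\id_A\otimes P\colon A\otimes B\to A\otimes\mathcal S_A(C)$, which fixes $c$ because it fixes all slices of $c$ and the functionals $\varphi\otimes\psi$ ($\varphi\in A^\ast$, $\psi\in B^\ast$) separate the points of $A\otimes B$; hence again $c\in A\otimes\mathcal S_A(C)$. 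Either way $C=A\otimes\mathcal S_A(C)$.
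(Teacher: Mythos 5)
Your architecture is the same as the paper's: approximate $c$ by a finite sum of elementary tensors, excise the slicing state to get $(h\otimes 1)c(h\otimes 1)\approx h^2\otimes d$ inside $C$, and then spread $h^2$ to an arbitrary positive target $a$ via Lemma \ref{Lem:simple}, the point being exactly the uniform bound $\|\sum_i x_ix_i^\ast\|\le\|h^2\|^{-1}\|a\|$ that keeps the conjugation by $x_i\otimes 1$ from amplifying the error. The closure properties of $\mathcal{S}_A(C)$ and both halves of the final statement (slice map property under the SOAP, and the completely bounded projection) are also handled as in the paper.

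There is, however, one genuine gap: you state and ``prove'' the slicing lemma for an \emph{arbitrary} state $\varphi$, but the Akemann--Anderson--Pedersen theorem only excises states lying in the weak-$\ast$ closure of the pure state space. For a general simple $A$ this is a real restriction: the normalized trace on $M_2(\mathbb{C})$, or any non-pure normal state on $\mathcal{K}(\ell^2)$, cannot be excised, so your construction of $h$ fails there, and your reduction ``every functional in $A^\ast$ is a linear combination of states'' does not close the gap. The repair is the one the paper makes: prove the claim only for \emph{pure} states $\varphi$ (where excision is available), and then extend to all of $A^\ast$ by observing that the linear span of the pure states is weak-$\ast$ dense in $A^\ast$, that for fixed $a$ and $c$ the map $\psi\mapsto a\otimes(\psi\otimes\id_B)(c)$ is weak-$\ast$-to-weak continuous, and that $C$, being a norm-closed subspace, is weakly closed in $A\otimes B$. (In the paper's applications $A$ is purely infinite simple, hence antiliminal, so by Glimm's theorem every state is a weak-$\ast$ limit of pure states and your excision step would happen to succeed; but the theorem is stated for arbitrary simple $A$, where it fails.) With this one adjustment your argument is correct.
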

\begin{proof}
To show the first statement,
it suffices to show the following claim.
For any pure state $\varphi$ on $A$ and
any $a\in A_+$, $c\in C$, with $b := (\varphi \otimes \id_B)(c)$,
we have $a\otimes b \in C$.
Indeed the claim implies that, since the set of pure states on $A$ spans
a weak-$\ast$ dense subspace of $A^\ast$ and $A_+$ spans $A$, for any $a\in A$
and any $\psi \in A^\ast$ with $\psi(a)\neq 0$, the subspace $X:=(\psi \otimes \id_B)(C)$ of $B$
satisfies $a \otimes X =(a\otimes B) \cap C$.
This implies $X=\mathcal{S}_A(C)$, and proves the first statement.
To show the claim, for any $\epsilon>0$, 
by the Akemann--Anderson--Pedersen excision theorem \cite{AAP} (Theorem 1.4.10 in \cite{BO}),
one can take $e\in A_+$ with $\|e\|=1$,
$e c e \approx_{\epsilon} e^2\otimes b$.
By Lemma \ref{Lem:simple}, there is a sequence $x_1, \ldots, x_n \in A$
satisfying
$\sum_{i=1}^n x_ie c ex_i^\ast \approx_{\epsilon} a\otimes b$.
The left term is contained in $C$ by assumption.
Thus $a\otimes b \in C$.

For the last statement, when $A$ satisfies the strong operator approximation property,
the claim follows from Theorem 12.4.4 in \cite{BO}.
When we have a completely bounded projection
$P\colon B \rightarrow \mathcal{S}_A(C)$,
it is not hard to see that for any $\varphi \in A^\ast$,
$(\varphi\otimes \id_B)((\id_A \otimes P)(c)-c)=0$ for all $c\in C$.
This proves $(\id_A \otimes P)|_C=\id_C$ and thus $C\subset A \otimes \mathcal{S}_A(C)$.
\end{proof}
\subsection*{Acknowledgements}
Parts of the present work are greatly improved during the author's visiting
in Research Center for Operator Algebras (Shanghai) for the conference ``Special Week on Operator Algebras 2019''.
He is grateful to the organizers of the conference for kind invitation.
This work was supported by JSPS KAKENHI Early-Career Scientists
(No.~19K14550) and tenure track funds of Nagoya University.
Finally, he would like to thank the second reviewer for helpful comments
which improve some explanations of the article.

\end{document}